\newtheorem*{theorem*}{\hspace{-6.3mm}\textbf{Theorem}}  %%% 
\newtheorem{theoremcounter}{Theorem Counter}[section]
\theoremstyle{remark}
\newtheorem{remark}{Remark}
\theoremstyle{definition}
\newtheorem{definition}[theoremcounter]{Definition}
\theoremstyle{plain}
\newtheorem{lemma}[theoremcounter]{Lemma}
\newtheorem{proposition}[theoremcounter]{Proposition}
\newtheorem{corollary}[theoremcounter]{Corollary}
\newtheorem{theorem}[theoremcounter]{Theorem}
\numberwithin{equation}{section}
\newcommand{\Z}{\mathbb{Z}}
\newcommand{\Q}{\mathbb{Q}}
\newcommand{\R}{\mathbb{R}}
\newcommand{\C}{\mathbb{C}}
\newcommand{\dd}{\mathrm{d}}
\newcommand{\bbH}{\mathbb{H}}
\newcommand{\calM}{\mathcal{M}}
\newcommand{\calS}{\mathcal{S}}
\DeclareMathOperator{\ImNew}{Im}
\renewcommand{\Im}{\ImNew}
\DeclareMathOperator{\SL}{SL}
\newcommand{\pmat}[1]{\begin{pmatrix}#1\end{pmatrix}}
\newcommand{\smat}[1]{\bigl(\begin{smallmatrix}#1\end{smallmatrix}\bigr)}
\begin{document}
% --------------------------------------------------------------------------

\title[]{Quasi-modularity in MacMahon partition variants and prime detection} 

\author[]{Soon-Yi Kang}
\address{Department of Mathematics, Kangwon National University, Chuncheon, 200-701, Republic of Korea}
\email{sy2kang@kangwon.ac.kr}

\author[]{Toshiki Matsusaka}
\address{Faculty of Mathematics, Kyushu University, Motooka 744, Nishi-ku, Fukuoka 819-0395, Japan}
\email{matsusaka@math.kyushu-u.ac.jp}

\author[]{Gyucheol Shin}
\address{Department of Mathematics, Sungkyunkwan University, Suwon 16419, Republic of Korea}
\email{sgc7982@gmail.com}

\thanks{The first author was supported by the National Research Foundation of Korea (NRF) funded by the Ministry of Education (NRF-2022R1A2C1007188). The second author was supported by JSPS KAKENHI (JP21K18141 and JP24K16901) and by the MEXT Initiative through Kyushu University's Diversity and Super Global Training Program for Female and Young Faculty (SENTAN-Q). The third author was supported by Basic Science Research Program through the National Research Foundation of Korea (NRF) grant funded by the Ministry of Education (2019R1A6A1A10073079) and by the Korea government (MSIT, RS-2024-00348504).}

\subjclass[2020]{Primary 11P81, 05A17; Secondary 11F27}
%\framebox{\today \ \now}

%	11P81  	Elementary theory of partitions
%	05A17  	Combinatorial aspects of partitions of integers
%	11F27  	Theta series; Weil representation; theta correspondences

% --------------------------------------------------------------------------

\maketitle

%\setcounter{tocdepth}{2}
%\tableofcontents

% --------------------------------------------------------------------------
\begin{abstract} 
	Building on the results of Craig, van Ittersum, and Ono, we provide a refined understanding of MacMahon's partition functions and their variants, including their quasi-modular properties and new prime-detecting expressions.
\end{abstract}
% --------------------------------------------------------------------------

%%%%%%%%%%%%%%%%%%%%%%%%%%%%%%%%%
%%%%%%%%%%%%%%%%%%%%%%%%%%%%%%%%%

% --------------------------------------------------------------------------
\section{Introduction}
% --------------------------------------------------------------------------

For positive integers $k$ and $n$, \emph{MacMahon's partition function}, (also known as \emph{MacMahon's sum-of-divisor}) $M_k(n)$ is defined by
\[
	M_k(n) \coloneqq \sum_{\substack{0 < m_1 < m_2 < \cdots < m_k \\ n = m_1 d_1 + m_2 d_2 + \cdots + m_k d_k}} d_1 d_2 \cdots d_k.
\]
This represents the sum of the products of the heights across all possible arrangements of $n$ squares into $k$ rectangular blocks with varying widths.

\begin{figure}[H]
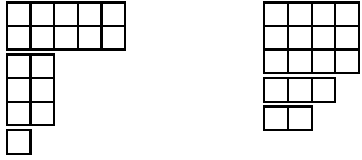

\centering
\ytableausetup{boxsize=3mm}
\begin{ytableau}
	\none & & & & &\\
	\none & & & & & \\
	\none \vspace{-2.5mm} \\
	\none & & \\
	\none & & \\
	\none & & \\
	\none \vspace{-2.5mm} \\
	\none & \\
\end{ytableau}
\qquad \qquad
\begin{ytableau}	
	\none & & & & \\
	\none & & & & \\
	\none & & & & \\
	\none \vspace{-2.5mm} \\
	\none & & & \\
	\none \vspace{-2.5mm} \\
	\none & & \\
\end{ytableau}
\caption{Two examples of partitions of 17 squares into 3 rectangular blocks: $1 \times 1$, $2 \times 3$, $5 \times 2$ and $2 \times 1$, $3 \times 1$, $4 \times 3$. In these cases, the products of the heights are given by $1 \cdot 3 \cdot 2$ and $1 \cdot 1 \cdot 3$, respectively.}
\end{figure}

As is clear from the definition, $M_1(n) = \sum_{d \mid n} d$ is the divisor sum. MacMahon's idea in~\cite{MacMahon1920} was to extend the divisor sums from the perspective of the theory of integer partitions. Recently, Craig--van Ittersum--Ono~\cite{CraigIttersumOno2024} revealed several prime-detecting expressions involving MacMahon's partition functions. For instance, they proved the following theorem:

\begin{theorem*}[Craig--van Ittersum--Ono]
	For positive integers $n$, we have
	\begin{align*}
		&(n^2 - 3n + 2) M_1(n) - 8M_2(n) \ge 0,\\
		&(3n^3 - 13n^2 + 18n - 8)M_1(n) + (12n^2 - 120n + 212) M_2(n) - 960M_3(n) \ge 0,
	\end{align*}
	and for $n \ge 2$, these expressions vanish if and only if $n$ is prime.
\end{theorem*}

They further discovered five distinct prime-detecting expressions involving MacMahon's partition functions (\cite[Theorem 1.2]{CraigIttersumOno2024}) and conjectured that any such expression (in terms of $M_k(n)$) can be written as a $\Q[n]$-linear combination of these five expressions. In this article, we present several questions that naturally arise from their results and provide answers to them. 

The motivation of the first question dates back to MacMahon's original article~\cite{MacMahon1920} from 1920. In this work, he introduced not only the $M_k(n)$ function discussed above but also several variants, which he examined in parallel. For instance, as a natural analogue of the generating function for $M_k(n)$, 
\begin{align}\label{eq:MacMahonA}
	A_k(q) \coloneqq \sum_{n=1}^\infty M_k(n) q^n = \sum_{0 < m_1 < m_2 < \cdots < m_k} \frac{q^{m_1 + m_2 + \cdots + m_k}}{(1-q^{m_1})^2 (1-q^{m_2})^2 \cdots (1- q^{m_k})^2},
\end{align}
he also introduced the following ``level 2" function:
\begin{align}\label{eq:MacMahonC}
	C_k(q) \coloneqq \sum_{n=1}^\infty M_k^{(2)}(n) q^n \coloneqq \sum_{\substack{0 < m_1 < m_2 < \cdots < m_k \\ m_i \not \equiv 0\ (2),\ (1 \le i \le k)}} \frac{q^{m_1 + m_2 + \cdots + m_k}}{(1-q^{m_1})^2 (1-q^{m_2})^2 \cdots (1- q^{m_k})^2}.
\end{align}
The function $C_k(q)$ often appears alongside $A_k(q)$ in the works of Andrews--Rose~\cite{AndrewsRose2013}, Bachmann~\cite{Bachmann2024}, Ono--Singh~\cite{OnoSingh2024}, and others. This naturally raises the question of whether $M_k^{(2)}(n)$ can also give rise to a prime-detecting expression. Our initial answer to this question leads to a more refined expression that detects not only primes but also powers of $2$ simultaneously.

\begin{theorem}\label{thm:level2}
	For positive integers $n \ge 2$, we have
	\[
		(n^2 - 4n + 3) M_1^{(2)}(n) - 24 M_2^{(2)}(n) \begin{cases}
			=0 &\text{if $n$ is odd prime},\\
			<0 &\text{if $n = 2^l$ for $l \in \Z_{\ge 1}$},\\
			>0 &\text{otherwise}
		\end{cases}
	\]
	and
	\begin{align*}
		&(n^4 - n^3 - 14n^2 + 29n - 15) M_1^{(2)}(n) - 120(3n-8) M_2^{(2)}(n) - 5760 M_3^{(2)}(n) \\
		&\begin{cases}
			=0 &\text{if $n$ is odd prime},\\
			<0 &\text{if $n = 2^l$ for $l \in \Z_{\ge 1}$},\\
			>0 &\text{otherwise}.
		\end{cases}
	\end{align*}
\end{theorem}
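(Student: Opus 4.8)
The idea is to reduce both assertions to elementary divisor-sum estimates, in the spirit of the argument behind the Craig--van Ittersum--Ono theorem quoted above, together with the bookkeeping needed to separate the pure powers of $2$ from the other even arguments. First, splitting the index $m$ in the product expansion $\sum_{k\ge 0}A_k(q)z^k=\prod_{m\ge 1}\bigl(1+z\,q^m/(1-q^m)^2\bigr)$ according to the parity of $m$ and using $q^{2m}/(1-q^{2m})^2=(q^2)^m/(1-(q^2)^m)^2$, one gets the factorisation $\sum_{k\ge 0}C_k(q)\,z^k=\bigl(\sum_{k\ge 0}A_k(q)\,z^k\bigr)\big/\bigl(\sum_{k\ge 0}A_k(q^2)\,z^k\bigr)$, whence $C_1(\tau)=A_1(\tau)-A_1(2\tau)$, $C_2(\tau)=A_2(\tau)-A_2(2\tau)+A_1(2\tau)^2-A_1(\tau)A_1(2\tau)$, and a similar (longer) identity for $C_3$. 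Combined with the quasi-modularity of $A_1,A_2,A_3$ --- equivalently, with the closed forms of $M_1,M_2,M_3$ in terms of $\sigma_1,\sigma_3,\sigma_5$ (\cite{AndrewsRose2013,CraigIttersumOno2024}) together with the classical evaluations of $\sum_{a+b=n}\sigma_i(a)\sigma_j(b)$ and $\sum_{a+2b=n}\sigma_i(a)\sigma_j(b)$ --- this expresses each $M_k^{(2)}(n)$ as an explicit $\Q[n]$-linear combination of $\sigma_1(n),\sigma_3(n),\sigma_5(n)$ and of $\sigma_1(n/2),\sigma_3(n/2),\sigma_5(n/2)$, the latter read as $0$ when $n$ is odd; in particular $M_1^{(2)}(n)=\sigma_1(n)-\sigma_1(n/2)$, which is $\sigma_1(n)$ for odd $n$.

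\textbf{Applying the differential operators.} Writing $D=q\frac{d}{dq}$, the generating series of the first bracket is $\mathcal E_1\coloneqq(D^2-4D+3)C_1-24\,C_2$ and that of the second is $\mathcal E_2\coloneqq(D^4-D^3-14D^2+29D-15)C_1-120(3D-8)C_2-5760\,C_3$; both are quasi-modular forms on $\Gamma_0(2)$. Substituting the formulas above and simplifying by Ramanujan's differential relations (and $D[f(2\tau)]=2(Df)(2\tau)$), I expect the polynomial prefactors $(n-1)(n-3)$ and $n^4-n^3-14n^2+29n-15$ to be exactly the ones that force all $\sigma_j(n/2)$-terms to cancel for odd $n$, leaving the same shape that appears in the quoted theorem; concretely I anticipate
\[
	(n^2-4n+3)M_1^{(2)}(n)-24\,M_2^{(2)}(n)=(n^2-n+1)\,\sigma_1(n)-\sigma_3(n)\qquad(n\ \text{odd}),
\]
and an analogous $\Q[n]$-combination of $\sigma_1(n),\sigma_3(n),\sigma_5(n)$ for the degree-$4$ bracket, vanishing whenever the odd $n$ is $1$ or prime. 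For even $n$ the same computation produces companion expressions $F_1(n),F_2(n)$ involving both $\sigma_j(n)$ and $\sigma_j(n/2)$.

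\textbf{Positivity.} For odd $n\ge 3$ one has $M_1^{(2)}(n)=\sigma_1(n)$, and the bracket reduces to the elementary fact that $(n^2-n+1)\sigma_1(n)-\sigma_3(n)=\sum_{d\mid n}d\,(n^2-n+1-d^2)$: each divisor $d\le n-1$ contributes a strictly positive term, since $d^2\le(n-1)^2<n^2-n+1$, the term $d=n$ equals $n-n^2$ and cancels the term $d=1$, so the total is $\sum_{1<d<n,\,d\mid n}d\,(n^2-n+1-d^2)\ge 0$, vanishing exactly when $n$ has no proper divisor $>1$, that is, when $n$ is prime; a similar, longer divisor argument covers the degree-$4$ bracket. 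The new case is $n$ even, say $n=2^a u$ with $u$ odd and $a\ge 1$: here the ``main'' term of $F_i(n)$ is negative, so there is no grouping into manifestly nonnegative pieces. Instead one bounds from below the surplus coming from the odd divisors $d>1$ of $n$ and shows it strictly exceeds the negative main term precisely when $u>1$ (giving $>0$), whereas for $u=1$ the main term stands alone and one obtains $<0$.

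\textbf{The main obstacle.} The real work is the even case in the last step: finding a divisor-sum bound sharp enough to give strict positivity for every even $n$ with an odd factor $>1$, uniformly in the $2$-adic valuation of $n$, together with the matching bound that forces strict negativity at the pure powers of $2$ --- and carrying this out for the degree-$4$ expression as well, where the underlying divisor identity is correspondingly more involved.
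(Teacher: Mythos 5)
Your overall strategy -- reduce everything to explicit divisor-sum formulas for $M_k^{(2)}(n)$ and then argue termwise over divisors -- is the right one, and your preliminary steps check out: the factorization $\sum_k C_k(q)z^k=\bigl(\sum_k A_k(q)z^k\bigr)/\bigl(\sum_k A_k(q^2)z^k\bigr)$ is correct, as is the anticipated identity $(n^2-4n+3)M_1^{(2)}(n)-24M_2^{(2)}(n)=(n^2-n+1)\sigma_1(n)-\sigma_3(n)$ for odd $n$, and your pairing of the $d=1$ and $d=n$ terms settles the odd case of the first expression. But the proof is not complete. The genuinely new content of the theorem is the trichotomy at even $n$ -- strict negativity exactly at $n=2^l$ and strict positivity at all other even $n$ -- and this is precisely the part you defer to ``the main obstacle'' without supplying an argument. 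The degree-four expression is likewise only asserted (``a similar, longer divisor argument''), even for odd $n$. A proposal that names the hard step but does not carry it out is a plan, not a proof.

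The missing ingredient is supplied in the paper by a level-$2$ version of Leli\`evre's criterion. Writing $\sigma_k^{(2)}(n)=\sigma_k(n)-\sigma_k(n/2)=\sum_{d\mid n,\ n/d\ \mathrm{odd}}d^k$, one shows that the $n$-th coefficient of $(D^l+1)G_{k+1}^{(2)}-(D^k+1)G_{l+1}^{(2)}$, namely $\sum_{d\mid n,\ n/d\ \mathrm{odd}}\bigl((n^l+1)d^k-(n^k+1)d^l\bigr)$, has the stated sign pattern; the theorem's two expressions are exactly $f_{1,3}^{(2)}$ and $f_{1,5}^{(2)}$ divided by $(D+1)$, which preserves signs. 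In this formulation the even case becomes transparent: if $n=2^l$ the sum has the single term $d=n$, equal to $n^k-n^l<0$; otherwise one writes $n=n_1n_2$ with $n_1$ the $2$-part and $n_2>1$ odd, bounds every term with $d\le n/2$ below by $d^k>0$, and pairs the lone negative term $d=n$ with the term $d=n_1$ to get a nonnegative contribution (strictly handled when $n_1=1$, where $n$ is odd composite). Note also a point that would trip up your sketch as written: the divisors entering $\sigma_k(n)-\sigma_k(n/2)$ for even $n$ are those with $n/d$ odd, i.e.\ the divisors carrying the \emph{full} power of $2$ dividing $n$ -- they are all even -- not ``the odd divisors $d>1$ of $n$'' as you state; identifying the correct index set is what makes the $n=2^l$ case collapse to a single term and the general even case amenable to the pairing above.
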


The proof follows similarly to that in \cite{CraigIttersumOno2024}, relying on the quasi-modularity of $C_k(q)$ on the level 2 congruence subgroup $\Gamma_0(2)$. A quick review of the theory of quasi-modular forms will be provided in \cref{section:Quasi}.

To reveal the quasi-modularity inherent in all variants studied by MacMahon, we develop a unified framework for MacMahon functions. Fix a positive integer $N$ and a non-empty subset $S \subset \Z/N\Z$. For positive integers $k$, we define the \emph{generalized MacMahon functions} as
\begin{align}\label{eq:Rose-generalized}
	A_{S,N,k}(q) \coloneqq \sum_{\substack{0 < m_1 < m_2 < \cdots < m_k \\ m_i \in S,\ (1 \le i \le k)}} \frac{q^{m_1 + m_2 + \cdots + m_k}}{(1-q^{m_1})^2 (1-q^{m_2})^2 \cdots (1-q^{m_k})^2},
\end{align}
where $m \in S$ means that when $m \in \Z$ is projected onto $\Z/N\Z$, its image lies in $S$. For instance, the above functions $A_k(q)$ and $C_k(q)$ are included as special cases where $A_k(q) = A_{\{0\}, 1, k}(q)$ and $C_k(q) = A_{\{1\}, 2, k}(q)$, respectively. This generalization was studied by Rose~\cite{Rose2015} in 2015, who showed that if $S$ is \emph{symmetric}, meaning $l \in S$ implies $-l \in S$, then $A_{S,N,k}(q)$ is a quasi-modular form for a certain congruence subgroup $\Gamma$. The precise identification of $\Gamma$ was later made by Larson~\cite{Larson2015}. As can be inferred from the fact that two articles are required to specify $\Gamma$, Rose's results are rather complicated. At least, they are not presented in a way that immediately yields basic equations such as
\begin{align}\label{eq:MacMahon-basics}
	M_1(n) = \sum_{d \mid n} d \quad \text{ or } \quad M_1^{(2)}(n) = \sum_{\substack{d \mid n \\ n/d \equiv 1\ (2)}} d.
\end{align}
Here, we aim to provide an alternative proof that more clearly shows the quasi-modularity.

\begin{theorem}\label{thm:MacMahon-Lehmer}
	Let $\Lambda_k(x_2, x_4, \dots, x_{2k}) \in \Q[x_2, x_4, \dots, x_{2k}]$ be a polynomial defined in \cref{def:Lehmer-poly}. Then, for any positive integer $N$ and a non-empty subset $S \subset \Z/N\Z$, we have
	\[
		A_{S,N,k}(q) = \Lambda_k(G_{S,N,2}(q), G_{S,N,4}(q), \dots, G_{S,N,2k}(q)),
	\]
	where we define
	\[
		G_{S,N,k}(q) \coloneqq \sum_{n=1}^\infty \bigg(\sum_{\substack{d \mid n \\ n/d \in S}} d^{k-1} \bigg)q^n.
	\]
	In particular, if $S$ is symmetric, then $A_{S,N,k}(q)$ is a (mixed weight) quasi-modular form of level $N$.
\end{theorem}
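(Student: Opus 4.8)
The plan is to express the generating function $A_{S,N,k}(q)$ as a polynomial in the "partial" Eisenstein-type series $G_{S,N,2j}(q)$ by an elementary manipulation of the summand, and then invoke quasi-modularity of those series when $S$ is symmetric. The starting point is the single-variable identity
\[
	\frac{q^m}{(1-q^m)^2} = \sum_{d=1}^\infty d\, q^{md},
\]
which already gives $A_{S,N,1}(q) = \sum_{m \in S} \sum_{d \ge 1} d\, q^{md} = \sum_n \big(\sum_{d \mid n,\ n/d \in S} d\big) q^n = G_{S,N,2}(q)$, matching \eqref{eq:MacMahon-basics} in the special cases $N=1,2$. For general $k$, the summand of $A_{S,N,k}(q)$ is a product of $k$ such factors indexed by the strictly increasing tuple $0 < m_1 < \cdots < m_k$ with all $m_i \in S$. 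The natural approach is to symmetrize: rewrite the sum over ordered tuples as $\frac{1}{k!}$ times a sum over all tuples of \emph{distinct} elements of $S$, and then compare with the sum over \emph{all} tuples (allowing repeats). Expanding $\prod_i \frac{q^{m_i}}{(1-q^{m_i})^2}$ as $\prod_i \sum_{d_i \ge 1} d_i q^{m_i d_i}$, the unrestricted sum over all $(m_i, d_i)$ factors as $G_{S,N,2}(q)^k$, while the sum over distinct $m_i$ is $k!\, A_{S,N,k}(q)$; the discrepancy is governed by coincidences $m_i = m_j$, which by inclusion–exclusion over set partitions of $\{1,\dots,k\}$ produces products of $G_{S,N,2j}(q)$ with $j$ ranging over the block sizes (a coincidence of $r$ indices contributes $\sum_{m \in S}\sum_{d_1,\dots,d_r \ge 1}(d_1\cdots d_r) q^{m(d_1+\cdots+d_r)}$, whose inner sum over a single $m$ is a polynomial in $\sum_d d^{2j-1} q^{md}$ for $j \le r$ by the Faà di Bruno / power-sum expansion). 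This is precisely the combinatorics encoded in the polynomial $\Lambda_k$ of \cref{def:Lehmer-poly}, so the identity $A_{S,N,k} = \Lambda_k(G_{S,N,2}, \dots, G_{S,N,2k})$ should drop out once $\Lambda_k$ is unwound; I would verify it by induction on $k$, peeling off the largest part $m_k$ and using the $k=1$ case together with the recursive structure of $\Lambda_k$.

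**Next, for the quasi-modularity conclusion**, the key observation is that $G_{S,N,2j}(q)$ is, up to an additive constant and normalization, a weight-$2j$ Eisenstein series for $\Gamma_1(N)$ (or $\Gamma_0(N)$ after averaging, when $S$ is symmetric). Concretely, writing the condition $n/d \in S$ as a sum of additive characters mod $N$, $G_{S,N,2j}(q)$ becomes a $\Q$-linear combination of the twisted Eisenstein series $\sum_n \big(\sum_{d \mid n} \psi(n/d) d^{2j-1}\big) q^n$ over Dirichlet-type characters $\psi$ mod $N$; symmetry of $S$ forces only the even part to survive, and the even such series are classical holomorphic modular forms of weight $2j$ on $\Gamma_0(N)$ (or $\Gamma_1(N)$), with the sole exception $j=1$, where $G_{S,N,2}$ acquires a quasi-modular correction exactly as $G_2 = E_2/(-24)$ does in level one. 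Hence each $G_{S,N,2j}(q)$ lies in the ring of quasi-modular forms of level $N$, and $A_{S,N,k}(q)$, being a polynomial in them, lies there too; the weights $2, 4, \dots, 2k$ occurring in $\Lambda_k$ explain the "mixed weight" qualifier. I would recall the relevant definitions and the fact that quasi-modular forms of a given level form a ring in \cref{section:Quasi}, so that "polynomial in quasi-modular forms is quasi-modular" is available as a black box here.

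**The main obstacle** I anticipate is bookkeeping in the combinatorial identity of the first paragraph — getting the set-partition inclusion–exclusion to line up exactly with the definition of $\Lambda_k$, including the rational coefficients coming from the $1/k!$ and from the power-sum-to-elementary-symmetric conversions. This is the step where it is easy to be off by a combinatorial factor, so I would pin down $\Lambda_k$ via an explicit generating-function identity (e.g. an exponential formula relating $\sum_k A_{S,N,k}(q)\, t^k$ to $\exp\big(\sum_j (\text{something})\, G_{S,N,2j}(q)\, t^{\,\cdot}\big)$) rather than a direct term-by-term match, and then extract $\Lambda_k$ by reading off coefficients. Once that identity is established, the reduction to classical Eisenstein series and the appeal to the ring structure of quasi-modular forms are routine, with the $j=1$ quasi-modular anomaly being the only subtlety — and it is harmless since it only enlarges the ambient ring to the full quasi-modular ring, which is exactly the claimed conclusion.
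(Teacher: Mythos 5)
Your overall architecture is sound and in fact runs parallel to the paper's: both arguments amount to taking the logarithm of $\prod_{0<m\in S}\bigl(1+\tfrac{q^m}{(1-q^m)^2}X^2\bigr)$, which converts the elementary symmetric functions $A_{S,N,k}$ into the power sums $p_r=\sum_{0<m\in S}\bigl(\tfrac{q^m}{(1-q^m)^2}\bigr)^r$ (your set-partition inclusion--exclusion is Newton's identities in disguise), and your quasi-modularity endgame is essentially \cref{prop:Eisenstein-modular} and \cref{cor:epsil1} (with the caveat that for general symmetric $S$ the correct group is $\Gamma_1(N)$, not $\Gamma_0(N)$). The genuine gap is the step you wave through as ``Fa\`a di Bruno / power-sum expansion'': you must show not merely that $\bigl(\tfrac{x}{(1-x)^2}\bigr)^r$ is \emph{some} $\Q$-linear combination of the odd power-sum series $\sum_{d\ge1}d^{2j-1}x^d$ for $1\le j\le r$ (already nontrivial --- it relies on the palindromy of the Eulerian numerators and fails for even exponents $d^{2j}$), but that the coefficients are exactly those that reassemble into the $\arcsin$ series defining $\Lambda_k$. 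Since $\Lambda_k$ is \emph{defined} by the generating identity in \cref{def:Lehmer-poly}, nothing in your proposal explains where $2\arcsin(X/2)$ comes from, so your concluding claim that ``this is precisely the combinatorics encoded in $\Lambda_k$'' is an assertion, not a proof. Concretely, the missing identity is
\[
\log\left(1+\frac{x}{(1-x)^2}\,X^2\right)=2\sum_{j=1}^\infty\frac{(-1)^{j-1}}{(2j)!}\left(2\arcsin\frac{X}{2}\right)^{2j}\sum_{d=1}^\infty d^{2j-1}x^d;
\]
setting $x=q^m$, summing over $0<m\in S$, and exponentiating is the entire theorem (this is \cref{thm:ASNk-poly}).

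The paper establishes this identity by substituting $X=2\sin\theta$, expanding $(2\sin\theta)^{2n}$ into cosines, and evaluating the resulting Fourier coefficients via the binomial identity \eqref{eq:refinement} --- that computation is the real content of the proof. If you want a route closer to your plan, observe that $(1-x)^2+x(2\sin\theta)^2=(1-xe^{2i\theta})(1-xe^{-2i\theta})$, so the left-hand side above equals $2\sum_{k\ge1}\frac{x^k}{k}(1-\cos 2k\theta)$, and expanding $1-\cos 2k\theta$ in powers of $\theta=\arcsin(X/2)$ yields exactly the claimed series. Either way, this single-$m$ identity must actually be proved; once it is, your symmetrization and the quasi-modularity argument go through as you describe.
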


This formulation shows that the \emph{type} (i.e., the polynomial $\Lambda_k$) is determined independently of the choice of $N$ and $S$, and that the quasi-modularity of the series $G_{S,N,k}(q)$ immediately implies the quasi-modularity of $A_{S,N,k}(q)$. In addition, since $\Lambda_1(x_2) = x_2$, the equations in \eqref{eq:MacMahon-basics} follow directly. As shown in \cref{prop:Eisenstein-modular}, when $S$ is symmetric, the quasi-modularity of $G_{S,N,k}(q)$ is well-known, and thus the quasi-modularity of $A_{S,N,k}(q)$ becomes clear. The generalized MacMahon functions defined in \eqref{eq:Rose-generalized}, however, are not yet sufficiently generalized to encompass all the variants introduced by MacMahon himself. Further generalizations, along with the proof of \cref{thm:MacMahon-Lehmer}, are provided in \cref{sec:MacMahon-variants}. In \cref{section:level2}, we then prove \cref{thm:level2} using this quasi-modularity as a key step and discuss the prospects for further prime-detecting expressions.

Finally, while $M_1(n)$ inherently involves the structure of $n$'s divisors, we present prime-detecting expressions that rely exclusively on the counting of lattice points, without any dependence on prime factorization. Here, we consider the following three lattices (with a translation).

\begin{align*}
	L_1 &\coloneqq \Z \pmat{2 \\ 0 \\ 0 \\ 0} + \Z \pmat{0 \\ 2 \\ 0 \\ 0} + \Z \pmat{0 \\ 0 \\ \sqrt{2} \\ 0} + \Z \pmat{0 \\ 0 \\ 0 \\ \sqrt{2}} + \frac{1}{\sqrt{2}} \pmat{0 \\ 0 \\ 1 \\ 1},\\
	L_2 &\coloneqq \Z \pmat{2 \\ 0 \\ 0 \\ 0} + \Z \pmat{0 \\ 2 \\ 0 \\ 0} + \Z \pmat{0 \\ 0 \\ \sqrt{2} \\ 0} + \Z \pmat{0 \\ 0 \\ 0 \\ \sqrt{2}} + \frac{1}{\sqrt{2}} \pmat{\sqrt{2} \\ \sqrt{2} \\ 1 \\ 1},\\
	E_8 &\coloneqq \Z \pmat{2 \\ 0 \\ 0 \\ 0 \\ 0 \\ 0 \\ 0 \\ 0} + \Z \pmat{-1 \\ 1 \\ 0 \\ 0 \\ 0 \\ 0\\ 0 \\ 0} + \Z \pmat{0 \\ -1 \\ 1 \\ 0 \\ 0 \\ 0 \\ 0 \\ 0} + \Z \pmat{0 \\ 0 \\ -1 \\ 1 \\ 0 \\ 0 \\ 0 \\ 0} + \Z \pmat{0 \\ 0 \\ 0 \\ -1 \\ 1 \\ 0 \\ 0 \\ 0} + \Z \pmat{0 \\ 0 \\ 0 \\ 0 \\ -1 \\ 1 \\ 0 \\ 0} + \Z \pmat{0 \\ 0 \\ 0 \\ 0 \\ 0 \\-1 \\ 1 \\ 0} + \Z \pmat{1/2 \\ 1/2 \\ 1/2 \\ 1/2 \\ 1/2 \\ 1/2 \\ 1/2 \\ 1/2}.
\end{align*}
Here, $L_1$ and $L_2$ are translations of rectangular lattices, while the last lattice is the $E_8$-lattice, a special lattice that achieves the densest sphere packing in dimension 8~\cite{Viazovska2017}. For each lattice $L \in \{L_1, L_2, E_8\}$, we define
\[
	r_L(n) \coloneqq \#\{x \in L : \|x\|^2 = n\},
\]
where $\|{}^t (x_1, \dots, x_r)\|^2 = x_1^2 + \cdots + x_r^2$. Then, the following prime-detecting expressions hold.

\begin{theorem}\label{thm:lattice}
	For positive integers $n \ge 2$, we have
	\begin{align*}
		60(n^2-n+1) r_{L_1}(n) - r_{E_8}(2n) \begin{cases}
			=0 &\text{if $n$ is prime with $n \equiv 1 \pmod{4}$},\\
			< 0 &\text{if } n \not\equiv 1 \pmod{4},\\
			>0 &\text{otherwise}
		\end{cases}
	\end{align*}
	and
	\begin{align*}
		60(n^2-n+1) r_{L_2}(n) - r_{E_8}(2n) \begin{cases}
			=0 &\text{if $n$ is prime with $n \equiv 3 \pmod{4}$},\\
			< 0 &\text{if } n \not\equiv 3 \pmod{4},\\
			>0 &\text{otherwise}.
		\end{cases}
	\end{align*}
\end{theorem}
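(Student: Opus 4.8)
The plan is to reduce the theorem to an elementary divisor-sum inequality. I would first compute the three theta series $\Theta_L(\tau) = \sum_{n\ge 0} r_L(n)q^n$ with $q = e^{2\pi i\tau}$. A general point of $L_1$ has $\|x\|^2 = 4a^2 + 4b^2 + \tfrac12\big((2c+1)^2 + (2d+1)^2\big)$ and a general point of $L_2$ has $\|x\|^2 = (2a+1)^2 + (2b+1)^2 + \tfrac12\big((2c+1)^2 + (2d+1)^2\big)$, with $a,b,c,d\in\Z$; a short congruence check then gives $r_{L_1}(n) = 0$ unless $n\equiv 1\pmod 4$ and $r_{L_2}(n) = 0$ unless $n\equiv 3\pmod 4$. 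Since $E_8$ is even with $\Theta_{E_8}(\tau) = E_4(\tau) = 1 + 240\sum_{m\ge 1}\sigma_3(m)q^m$, one has $r_{E_8}(2m) = 240\,\sigma_3(m)$ for $m\ge 1$. So for $n\ge 2$ outside the relevant residue class the expression in the theorem is already $-240\,\sigma_3(n) < 0$, and it remains to treat $n\equiv 1\pmod 4$ (for $L_1$) and $n\equiv 3\pmod 4$ (for $L_2$).

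The key identity to establish is $r_{L_1}(n) = 4\sigma(n)$ for $n\equiv 1\pmod 4$ and $r_{L_2}(n) = 4\sigma(n)$ for $n\equiv 3\pmod 4$. For this I would set $\phi(\tau) = \sum_{n\in\Z}q^{n^2}$ and $g(\tau) = \big(\sum_{c\in\Z}q^{(2c+1)^2/2}\big)^2$ (a genuine $q$-series, since a sum of two odd squares is even), so that the formulas above read $\Theta_{L_1} = \big(\sum_a q^{4a^2}\big)^2 g$ and $\Theta_{L_2} = \big(\sum_a q^{(2a+1)^2}\big)^2 g$. The substitution $(x,y)\mapsto\big(\tfrac{x+y}{2},\tfrac{x-y}{2}\big)$, a bijection from integer pairs of equal parity onto all integer pairs that sends $x^2+y^2$ to $2(u^2+v^2)$, gives $\big(\sum_a q^{4a^2}\big)^2 + \big(\sum_a q^{(2a+1)^2}\big)^2 = \phi(2\tau)^2$, and applied to pairs of odd integers it identifies $g$ with the odd-degree part of $\phi(\tau)^2$, i.e.\ $g = \phi(\tau)^2 - \phi(2\tau)^2$. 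Hence $\Theta_{L_1} + \Theta_{L_2} = \phi(2\tau)^2 g$; squaring $\phi(\tau)^2 = \phi(2\tau)^2 + g$ then gives $\phi(2\tau)^2 g = \tfrac12\big(\phi(\tau)^4 - \phi(2\tau)^4 - g^2\big)$. Since $\phi(2\tau)^2 g$ is supported on odd exponents and $\phi(2\tau)^4$, $g^2$ on even ones, comparing odd-degree parts and invoking Jacobi's four-square theorem ($r_4(n)=8\sigma(n)$ for odd $n$) yields $\Theta_{L_1}+\Theta_{L_2} = 4\sum_{n\text{ odd}}\sigma(n)q^n$; comparing with the residue-class statements of the first paragraph splits this into the two claimed formulas.

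Finally, for $n\ge 2$ in the relevant residue class the expression equals $240\,D(n)$ with $D(n):=(n^2-n+1)\sigma(n)-\sigma_3(n)$. Writing $D(n)=\sum_{d\mid n}d\,(n^2-n+1-d^2)$ and grouping $d$ with $n/d$, a pair $\{d,e\}$ with $de=n$, $d<e$, contributes $(d+e)(d^2-1)(e^2-1)$, and a square $n$ contributes an extra $\sqrt n\,(n-1)^2$ from $d=\sqrt n$; all summands are $\ge0$, so $D(n)\ge0$, and $D(n)=0$ exactly when $n$ has no divisor $d$ with $1<d\le\sqrt n$, i.e.\ exactly when $n$ is prime (for prime $n$ one also has $(n^2-n+1)(n+1)=n^3+1$ directly). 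Together with the sign $-240\,\sigma_3(n)<0$ off the residue class, this proves both halves.

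Everything here is routine; the one place that demands care is the theta-series bookkeeping in the middle paragraph — keeping the elementary $\phi$-identities straight and tracking residues modulo $4$ on both sides. (One could also observe that $\Theta_{L_1}+\Theta_{L_2}$ is four times the odd part of $C_1(q)=\sum_n M_1^{(2)}(n)q^n$, connecting the statement to the MacMahon variants of \cref{sec:MacMahon-variants}, but the self-contained argument above is shortest.)
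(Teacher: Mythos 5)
Your proposal is correct, and it reaches the theorem by a genuinely different route in both halves of the argument. For the representation numbers, the paper simply quotes the identities $r_{L_1}(n)=4\sigma_1^{(1,4)}(n)$ and $r_{L_2}(n)=4\sigma_1^{(3,4)}(n)$ from the literature (Hirschhorn, Melham), whereas you derive them from Jacobi's four-square theorem via the elementary identities $g=\phi(\tau)^2-\phi(2\tau)^2$ and $\Theta_{L_1}+\Theta_{L_2}=\phi(2\tau)^2 g=\tfrac12\bigl(\phi(\tau)^4-\phi(2\tau)^4-g^2\bigr)$; I checked the parity bookkeeping and the bijection $(x,y)\mapsto(\tfrac{x+y}{2},\tfrac{x-y}{2})$, and it all works, making this part self-contained at the cost of still importing $r_4(n)=8\sigma(n)$ for odd $n$ and $\Theta_{E_8}=E_4$. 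For the inequality, the paper factors $60(n^2-n+1)\cdot 4 - 240\cdot(\cdot)$ through $(n^3+1)=(n+1)(n^2-n+1)$ and invokes Leli\`evre's criterion (\cref{prop:Lelievre} with $N=1$, $(k,l)=(1,3)$), i.e.\ the nonnegativity of $\sum_{d\mid n}\bigl((n^3+1)d-(n+1)d^3\bigr)$; your version divides out the factor $n+1$ and proves $D(n)=\sum_{d\mid n}d\,(n^2-n+1-d^2)\ge 0$ directly by pairing $d$ with $n/d$, and the identity $(d+e)(d^2-1)(e^2-1)$ for the paired contribution (plus $\sqrt{n}\,(n-1)^2$ for a square divisor) is correct and immediately gives the equality case. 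Your pairing argument is essentially the proof of Leli\`evre's criterion specialized to $(k,l)=(1,3)$, written out from scratch; what the paper's route buys is uniformity with the rest of the article (the same criterion drives \cref{thm:level2}), while yours buys a shorter, fully elementary and self-contained presentation. Your closing observation that $\Theta_{L_1}+\Theta_{L_2}$ is four times the odd part of $C_1(q)$ is also correct and is a nice bridge to \cref{sec:MacMahon-variants}.
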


This provides refined prime-detecting expressions with respect to the remainder modulo $4$. To reveal the tricks, the idea of the proof is based on that for MacMahon's partition functions and more recent work by Gomez~\cite{Gomez2024} and Craig~\cite{Craig2024}. Recall that there are two standard approaches to constructing modular forms: one using Eisenstein series and the other using theta functions. The proof in \cite{CraigIttersumOno2024} employs Eisenstein series, and the result here is essentially a reformulation in terms of theta functions. While the modularity of Eisenstein series follows directly from its definition, establishing the modularity of theta functions relies on Poisson's summation formula in Fourier analysis, which is central to the nontriviality of the argument. This proof will be presented in \cref{section:theta}. Finally, although this article does not explore the approach, it is worth noting that Craig--van Ittersum--Ono~\cite{CraigIttersumOno2024} successfully constructed infinitely many prime-detecting expressions by considering a further generalization of the level 1 MacMahon's partition function, known as the MacMahonesque partition functions. This approach involves considering sum of products of powers of divisors, which is also a natural method of generalization.

% --------------------------------------------------------------------------
\section{Quasi-modular forms}\label{section:Quasi}
% --------------------------------------------------------------------------

First, we discuss a general theory of quasi-modular forms. Let $\bbH \coloneqq \{z \in \C : \Im(z) > 0\}$ denote the \emph{upper half-plane}. For a positive integer $N$, the subgroups $\Gamma_0(N)$ and $\Gamma_1(N)$ of $\SL_2(\Z)$, known as \emph{congruence subgroups}, are defined by
\begin{align*}
	\Gamma_0(N) &\coloneqq \left\{ \pmat{a & b \\ c & d} \in \SL_2(\Z) : c \equiv 0 \pmod{N}\right\},\\
	\Gamma_1(N) &\coloneqq \left\{ \pmat{a & b \\ c & d} \in \Gamma_0(N) : a, d \equiv 1 \pmod{N}\right\}.
\end{align*}
In this section, $\Gamma$ will refer to either $\Gamma_0(N)$ or $\Gamma_1(N)$. For an integer $k$, a holomorphic function $f: \bbH \to \C$ is called a \emph{modular form} of weight $k$ on $\Gamma$ if it satisfies the transformation law
\[
	f \left(\frac{az+b}{cz+d}\right) = (cz+d)^k f(z)
\]
for all $\smat{a & b \\ c & d} \in \Gamma$ and is also holomorphic at cusps $\Q \cup \{i\infty\}$. In particular, when a modular form $f$ decays exponentially at each cusp, it is called a \emph{cusp form}. (For a detailed definition, see~\cite[Chapter 1]{DiamondShurman2005}). Let $\calM_k(\Gamma)$ (resp.~$\calS_k(\Gamma)$) denote the $\C$-vector space of weight $k$ modular forms (resp.~cusp forms) on $\Gamma$. It is known that $\calM_k(\Gamma) = \{0\}$ for $k < 0$ and that $\calM_k(\Gamma)$ is finite-dimensional for any integer $k$. The most fundamental example is a class of functions called \emph{Eisenstein series}. For an even integer $k \ge 4$, we have
\begin{align}\label{eq:Eisenstein}
	E_k(z) \coloneqq 1 - \frac{2k}{B_k} \sum_{n=1}^\infty \sigma_{k-1}(n) q^n \in \calM_k(\SL_2(\Z)),
\end{align}
where $B_k$ is the $k$-th Bernoulli number, $\sigma_{k-1}(n) \coloneqq \sum_{d \mid n} d^{k-1}$, and $q = e^{2\pi iz}$. The structure of the space of modular forms on $\SL_2(\Z)$ is well understood. It is known that
\[
	\bigoplus_{k \in \Z} \calM_k(\SL_2(\Z)) = \C [E_4, E_6],
\]
implying every modular form can be expressed as a polynomial in $E_4(z)$ and $E_6(z)$. On the other hand, the function
\[
	E_2(z) \coloneqq 1 - 24 \sum_{n=1}^\infty \sigma_1(n) q^n,
\]
obtained by applying equation~\eqref{eq:Eisenstein} for $k = 2$, is not a modular form. However, the algebra 
\[
	\widetilde{\calM}(\SL_2(\Z)) \coloneqq \C[E_2, E_4, E_6],
\]
which extends the previous polynomial ring by including $E_2(z)$, satisfies the special property of being closed under the action of the $q$-differential operator
\[
	D \coloneqq \frac{1}{2\pi i} \frac{\dd}{\dd z} = q \frac{\dd}{\dd q}.
\]
This can be seen from Ramanujan's identities:
\begin{align}\label{eq:Ramanujan-id}
	DE_2 = \frac{E_2^2 - E_4}{12}, \quad DE_4 = \frac{E_2 E_4 - E_6}{3}, \quad DE_6 = \frac{E_2 E_6 - E_4^2}{2}.
\end{align}
We call an element of $\widetilde{\calM}(\SL_2(\Z)) = \C[E_2, E_4, E_6]$ a (mixed weight) \emph{quasi-modular form} on $\SL_2(\Z)$. Although quasi-modular forms are typically defined through (quasi-)modular transformation laws, we adopt this ad-hoc definition for convenience. For a comprehensive and intrinsic treatment, see the work of Kaneko--Zagier~\cite{KanekoZagier1995} or \cite[Section 5]{Zagier2008}. The space of quasi-modular forms on a congruence subgroup is defined analogously, and the following structure theorem holds~\cite[Proposition 1]{KanekoZagier1995}. This characterization will serve as our working definition throughout this article.
\begin{definition}
	Let $\Gamma$ be the congruence subgroup $\Gamma_0(N)$ or $\Gamma_1(N)$. We define a (mixed weight) \emph{quasi-modular form} on $\Gamma$ as an element of
	\begin{align}\label{eq:def-quasi-N}
		\widetilde{\calM}(\Gamma) \coloneqq \left(\bigoplus_{k \in \Z} \calM_k(\Gamma)\right) \otimes \C[E_2].
	\end{align}
	In particular, if it can be expressed in the homogeneous form
	\[
		f(z) = \sum_{0 \le j \le k/2} f_j(z) E_2(z)^j, \quad (f_j \in \calM_{k-2j}(\Gamma)),
	\]
	we call $f$ a quasi-modular form of weight $k$ on $\Gamma$.
\end{definition}

Let $\widetilde{\calM}_k(\Gamma)$ denote the $\C$-vector space of weight $k$ quasi-modular forms on $\Gamma$. The following structure theorem is also known.

\begin{proposition}[{\cite[Proposition 1]{KanekoZagier1995} and \cite[Proposition 20]{Zagier2008}}]\label{prop:quasi-structure}
	For any even integer $k \ge 2$, we have $D(\widetilde{\calM}_k(\Gamma)) \subset \widetilde{\calM}_{k+2}(\Gamma)$, and
	\[
		\widetilde{\calM}_k(\Gamma) = \bigoplus_{j=0}^{k/2-1} D^j (\calM_{k-2j}(\Gamma)) \oplus \C \cdot D^{k/2-1} \phi,
	\]
	where $\phi \in \widetilde{\calM}_2(\Gamma) \setminus \calM_2(\Gamma)$ is a weight $2$ non-modular quasi-modular form on $\Gamma$. (For instance, we can take $\phi = E_2$).
\end{proposition}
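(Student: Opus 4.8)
The plan is to prove the two assertions in turn. For the inclusion $D(\widetilde{\calM}_k(\Gamma))\subset\widetilde{\calM}_{k+2}(\Gamma)$ I would invoke the Serre derivative: the standard fact that for $f\in\calM_m(\Gamma)$ one has $\vartheta_m f\coloneqq Df-\tfrac{m}{12}E_2 f\in\calM_{m+2}(\Gamma)$, because the non-holomorphic term in the transformation law of $E_2$ exactly cancels the defect created by differentiating $f$. Writing an element of $\widetilde{\calM}_k(\Gamma)$ as $\sum_{0\le j\le k/2}f_jE_2^j$ with $f_j\in\calM_{k-2j}(\Gamma)$, I would apply the Leibniz rule to each $f_jE_2^j$, substitute $DE_2=\tfrac{1}{12}(E_2^2-E_4)$ from \eqref{eq:Ramanujan-id} (an identity on all of $\bbH$, hence on $\Gamma$), and replace $Df_j$ by $\vartheta_{k-2j}f_j+\tfrac{k-2j}{12}E_2 f_j$; this exhibits $D(f_jE_2^j)$ as a $\C$-linear combination of $(\vartheta_{k-2j}f_j)\,E_2^{j}$, $f_j\,E_2^{j+1}$, and $(E_4 f_j)\,E_2^{j-1}$, each of which is a modular form on $\Gamma$ times a power of $E_2$ with total weight $k+2$, and so lies in $\widetilde{\calM}_{k+2}(\Gamma)$. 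Summing over $j$ gives the inclusion, and iterating it shows that $D^j(\calM_{k-2j}(\Gamma))$ for $0\le j\le k/2-1$ and $D^{k/2-1}\phi$ all lie in $\widetilde{\calM}_k(\Gamma)$, so the right-hand side of the claimed identity is contained in the left.

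To prove equality I would combine a dimension count with a directness argument based on the depth (the degree in $E_2$). A short computation with the formula just obtained shows that if a quasi-modular form $h$ has weight $w$ and depth exactly $p$ with leading coefficient $h_p\in\calM_{w-2p}(\Gamma)$, then $Dh$ has depth exactly $p+1$ with leading coefficient $\tfrac{w-p}{12}h_p$, which is nonzero whenever $h_p\ne0$, since any nonconstant quasi-modular form satisfies $w>p$ (indeed $w-2p\ge0$). Hence $D^j$ carries a nonzero $f_j\in\calM_{k-2j}(\Gamma)$ (weight $k-2j\ge2$, depth $0$) to a form of depth exactly $j$, while $D^{k/2-1}\phi$, with $\phi$ of depth $1$ (e.g.\ $\phi=E_2$), has depth exactly $k/2$. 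Given a relation $\sum_{j=0}^{k/2-1}D^jf_j+c\,D^{k/2-1}\phi=0$, comparing coefficients of $E_2^{k/2}$ forces $c=0$, and then, if some $f_j$ were nonzero, comparing coefficients of $E_2^{j_0}$ for the largest such $j_0$ would produce a nonzero multiple of $f_{j_0}$, a contradiction; thus the sum is direct.

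It remains to match dimensions. Each $D^j$ is injective on $\calM_{k-2j}(\Gamma)$ for $0\le j\le k/2-1$ (a nonzero modular form of positive weight is not annihilated by $D^j$), so the right-hand side has dimension $\sum_{j=0}^{k/2-1}\dim\calM_{k-2j}(\Gamma)+1$. On the other hand, the definition $\widetilde{\calM}_k(\Gamma)=\bigoplus_{0\le j\le k/2}\calM_{k-2j}(\Gamma)E_2^j$ gives $\dim\widetilde{\calM}_k(\Gamma)=\sum_{j=0}^{k/2}\dim\calM_{k-2j}(\Gamma)$, and the remaining summand $\dim\calM_0(\Gamma)=1$ is matched by the $\C\cdot D^{k/2-1}\phi$ term. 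Hence the two spaces have the same dimension, and since one contains the other, they coincide.

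I expect the depth bookkeeping to be the main obstacle: one has to check carefully that $D$ raises depth by exactly one on nonconstant forms (so that the leading $E_2$-coefficients of the various $D^jf_j$ cannot interfere with one another) and that the scalar $\tfrac{w-p}{12}$ never vanishes in the range that occurs, and one has to recognise that the stratum at depth $k/2$ is precisely one-dimensional — this is exactly where $\calM_0(\Gamma)=\C$ enters, and it explains the single extra generator $D^{k/2-1}\phi$ beyond the images $D^j(\calM_{k-2j}(\Gamma))$. The remaining ingredients are routine applications of the Serre derivative and the Leibniz rule.
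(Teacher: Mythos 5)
The paper does not prove this proposition; it is quoted from Kaneko--Zagier and Zagier's lecture notes, and your argument reproduces the standard proof given there: the Serre derivative computation showing $D$ preserves quasi-modularity and raises the depth (the degree in $E_2$) by exactly one with leading coefficient $\tfrac{w-p}{12}h_p \neq 0$, followed by the directness-plus-dimension count in which the single depth-$k/2$ stratum $\calM_0(\Gamma)E_2^{k/2}$ accounts for the extra generator $D^{k/2-1}\phi$. Your proof is correct and complete at the level of detail expected; the only implicit ingredient worth flagging is the uniqueness of the representation $\sum_j f_j E_2^j$ (i.e.\ that $E_2$ is transcendental over the ring of modular forms), which is what licenses ``comparing coefficients of $E_2^{j}$'' and is built into the tensor-product definition \eqref{eq:def-quasi-N}.
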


% --------------------------------------------------------------------------
\section{MacMahon partition variants}\label{sec:MacMahon-variants}
% --------------------------------------------------------------------------

% --------------------------------------------------------------------------
\subsection{Generalized MacMahon functions}
% --------------------------------------------------------------------------

MacMahon~\cite{MacMahon1920} developed his discussion further by introducing not only $A_k(q)$ and $C_k(q)$ defined in \eqref{eq:MacMahonA} and \eqref{eq:MacMahonC}, but also their variants $A_k(q), B_k(q), \dots, H_k(q)$. In this context, we introduce \emph{generalized MacMahon functions}, which encompass all of these as special cases, and examine their quasi-modularity.

\begin{definition}\label{def:ASNk}
	Fix a positive integer $N$ and a non-empty subset $S \subset \Z/N\Z$. For a positive integer $k$ and $\epsilon \in \{-1, 1\}$, we define
	\[
		A_{S, N, \epsilon, k}(q) \coloneqq \sum_{\substack{0 < m_1 < m_2 < \cdots < m_k \\ m_i \in S,\ (1 \le i \le k)}} \frac{\epsilon^k q^{m_1 + m_2 + \cdots + m_k}}{(1 - \epsilon q^{m_1})^2 (1- \epsilon q^{m_2})^2 \cdots (1 - \epsilon q^{m_k})^2}.
	\]
\end{definition}

This provides a further generalization of \eqref{eq:Rose-generalized}. MacMahon's variants are realized as
\begin{align*}
	A_k(q) &= A_{\{0\}, 1, 1, k}(q), & B_k(q) &= (-1)^k A_{\{0\}, 1, -1, k}(q),\\
	C_k(q) &= A_{\{1\}, 2, 1, k}(q), & D_k(q) &= (-1)^k A_{\{1\}, 2, -1, k}(q),\\
	E_k(q) &= A_{\{1,4\}, 5, 1, k}(q), & F_k(q) &= (-1)^k A_{\{1,4\}, 5, -1, k}(q),\\
	G_k(q) &= A_{\{2,3\}, 5, 1, k}(q), & H_k(q) &= (-1)^k A_{\{2,3\}, 5, -1, k}(q).
\end{align*}
Andrews--Rose~\cite{AndrewsRose2013} showed that $A_k(q)$ is a (mixed weight) quasi-modular form. Although not explicitly stated, the quasi-modularity of $C_k(q)$ can also be deduced from \cite[Corollary 3]{AndrewsRose2013}. Rose~\cite{Rose2015} later introduced the aforementioned generalization and showed that if the set $S$ is symmetric, then $A_{S, N, \epsilon, k}(q)$ is a quasi-modular form for a certain congruence subgroup $\Gamma$. The precise characterization of $\Gamma$ for the case $\epsilon =1$ was subsequently provided by Larson~\cite{Larson2015}. 

In this section, we provide an alternative proof of Rose's result by presenting a generating function for the generalized MacMahon functions $A_{S,N,\epsilon,k}(q)$ that differs from Rose's. This new generating function immediately shows that $A_{S,N,\epsilon,k}(q)$ can be expressed as a polynomial in Eisenstein series. Consequently, the quasi-modularity of Eisenstein series directly implies the quasi-modularity of $A_{S,N,\epsilon,k}(q)$. This contrasts with Rose's generating functions and Larson's work, where determining the group $\Gamma$ required additional efforts. To achieve this, we recall (normalized) Lehmer's polynomials $\Lambda_k(x_2, x_4, \dots, x_{2k})$, which were initially introduced experimentally in~\cite{Lehmer1966} through the first few terms, without a definition, and were later defined by the second author and Shibukawa~\cite{MatsusakaShibukawa2024} in the context of derivative values of cyclotomic polynomials.

\begin{definition}\label{def:Lehmer-poly}
	We define the polynomial $\Lambda_k(x_2, x_4, \dots, x_{2k}) \in \Q[x_2, x_4, \dots, x_{2k}]$ by the generating series
	\[
		1 + \sum_{k=1}^\infty \Lambda_k(x_2, x_4, \dots, x_{2k}) X^{2k} \coloneqq \exp \left(2 \sum_{j=1}^\infty \frac{(-1)^{j-1}}{(2j)!} \left(2 \arcsin \frac{X}{2}\right)^{2j} x_{2j}\right).
	\]
\end{definition}

The first few examples of the polynomials $\Lambda_k$ are given by
\begin{align*}
	\Lambda_1(x_2) &= x_2,\\
	\Lambda_2(x_2, x_4) &= \frac{1}{12} \bigg(6x_2^2 + x_2 - x_4 \bigg),\\
	\Lambda_3(x_2, x_4, x_6) &= \frac{1}{360} \bigg(60x_2^3 + 30x_2^2 - 2(15x_4 - 2) x_2 - 5x_4 + x_6 \bigg),\\
	\Lambda_4(x_2, x_4, x_6, x_8) &= \frac{1}{20160} \bigg(840x_2^4 + 840 x_2^3 -42(20x_4 - 7)x_2^2\\
		&\qquad + 4(14x_6 - 105x_4 + 9)x_2 + 70 x_4^2 - 49 x_4 + 
 14 x_6 - x_8 \bigg).
\end{align*}

\begin{remark}
	The original Lehmer polynomial $\Omega_k(x_2, \dots, x_{2k})$ is given by
	\[
		\Omega_k(x_2, x_4, \dots, x_{2k}) = (-1)^k \frac{(2k)!}{2B_{2k}} \Lambda_k(-B_2 x_2, -B_4 x_4, \dots, -B_{2k} x_{2k}).
	\]
\end{remark}

The main result presented here is that generalized MacMahon functions $A_{S,N,\epsilon,k}(q)$ can be expressed using this polynomial and the Eisenstein series defined below.

\begin{definition}
	Under the same notation as in \cref{def:ASNk}, we define the \emph{divisor sum} as
	\begin{align*}
		\sigma_{S,N,\epsilon, k}(n) &\coloneqq \sum_{\substack{d \mid n \\ n/d \in S}} \epsilon^d d^k.
	\end{align*}
	The associated \emph{Eisenstein series} is then defined by
	\begin{align*}
		G_{S, N, \epsilon, k}(q) \coloneqq \sum_{n=1}^\infty \sigma_{S,N,\epsilon,k-1}(n) q^n.
	\end{align*}
\end{definition}

Under these notations, for any non-empty subset $S \subset \Z/N\Z$, which is not necessarily symmetric, the following generating function is obtained.

\begin{theorem}[A generalized version of \cref{thm:MacMahon-Lehmer}]\label{thm:ASNk-poly}
	We have
	\[
		1 + \sum_{k=1}^\infty A_{S,N,\epsilon,k}(q) X^{2k} = \exp \left(2 \sum_{j=1}^\infty \frac{(-1)^{j-1}}{(2j)!} G_{S,N,\epsilon, 2j}(q) \left(2 \arcsin \frac{X}{2}\right)^{2j}\right),
	\]
	which implies that
	\[
		A_{S,N,\epsilon, k}(q) = \Lambda_k(G_{S,N,\epsilon, 2}(q), G_{S,N,\epsilon,4}(q), \dots, G_{S,N,\epsilon,2k}(q)).
	\]
\end{theorem}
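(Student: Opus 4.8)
The plan is to prove the first (exponential generating function) identity, since the second follows immediately by comparing it with the defining series for $\Lambda_k$ in \cref{def:Lehmer-poly}. The key observation is that both sides factor over the ``slots'' $m$ ranging over positive integers with $m \in S$. For a single such $m$, consider the local factor
\[
	1 + \frac{\epsilon q^m}{(1-\epsilon q^m)^2} X^2.
\]
The heart of the matter is the elementary identity: for a variable $t$ (to be specialized to $\epsilon q^m$),
\[
	1 + \frac{t}{(1-t)^2} X^2 = \exp\!\left( 2 \sum_{j=1}^\infty \frac{(-1)^{j-1}}{(2j)!} \left( \sum_{d=1}^\infty t^d d^{2j-1} \right) \left( 2\arcsin\frac{X}{2}\right)^{2j} \right).
\]
Granting this, I would take the logarithm, expand $\sum_{d} t^d d^{2j-1}$ with $t = \epsilon q^m$ (so $t^d = \epsilon^d q^{md}$), and sum over all valid $m$; collecting the coefficient of $q^n$ produces exactly $\sigma_{S,N,\epsilon,2j-1}(n) = \sum_{d\mid n,\, n/d\in S}\epsilon^d d^{2j-1}$, whence the right-hand side becomes $\exp(2\sum_j \frac{(-1)^{j-1}}{(2j)!} G_{S,N,\epsilon,2j}(q)(2\arcsin(X/2))^{2j})$. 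On the left-hand side, the product $\prod_{m \in S, m>0}\left(1 + \frac{\epsilon q^m}{(1-\epsilon q^m)^2}X^2\right)$, when expanded in powers of $X^2$, has $X^{2k}$-coefficient equal to $\sum_{0<m_1<\cdots<m_k,\, m_i\in S}\prod_i \frac{\epsilon q^{m_i}}{(1-\epsilon q^{m_i})^2} = A_{S,N,\epsilon,k}(q)$ after factoring $\epsilon^k$ and combining exponents — matching \cref{def:ASNk}. So the identity reduces entirely to the single-slot claim.

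To establish the single-slot identity I would first rewrite $\frac{t}{(1-t)^2} = \sum_{d\ge 1} d\, t^d$ and, more usefully, recognize that $1 + \frac{t}{(1-t)^2}X^2 = \frac{(1-t)^2 + tX^2}{(1-t)^2}$. Setting $X = 2\sin\theta$ (equivalently $\theta = \arcsin(X/2)$), the numerator becomes $1 - 2t + t^2 + 4t\sin^2\theta = 1 - 2t(1 - 2\sin^2\theta) + t^2 = 1 - 2t\cos 2\theta + t^2 = (1 - t e^{2i\theta})(1 - t e^{-2i\theta})$. Therefore
\[
	\log\!\left(1 + \frac{t}{(1-t)^2}X^2\right) = \log(1 - te^{2i\theta}) + \log(1 - te^{-2i\theta}) - 2\log(1-t) = -\sum_{d=1}^\infty \frac{t^d}{d}\left(e^{2id\theta} + e^{-2id\theta} - 2\right).
\]
Now $e^{2id\theta}+e^{-2id\theta}-2 = 2(\cos 2d\theta - 1) = -4\sin^2(d\theta)$, so the log equals $\sum_{d\ge1}\frac{4t^d}{d}\sin^2(d\theta)$. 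Finally I would expand $\sin^2(d\theta)$ via its Taylor series in $\theta$: using $\sin^2 u = \frac12(1-\cos 2u) = -\sum_{j\ge 1}\frac{(-1)^j 2^{2j-1}}{(2j)!}u^{2j}$, we get $\frac{4t^d}{d}\sin^2(d\theta) = \sum_{j\ge1}\frac{4(-1)^{j-1}2^{2j-1}}{d\,(2j)!}t^d d^{2j}\theta^{2j} = 2\sum_{j\ge1}\frac{(-1)^{j-1}}{(2j)!}t^d d^{2j-1}(2\theta)^{2j}$. Summing over $d$ and recalling $\theta = \arcsin(X/2)$ gives precisely the claimed exponent, completing the single-slot identity.

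The main obstacle is a convergence/formal-power-series bookkeeping issue rather than a conceptual one: the interchange of the infinite product over $m$, the infinite sums over $d$, and the expansions in $X$ and $\theta$ must all be justified. The cleanest route is to work entirely in the ring of formal power series $\Q[\epsilon][[q]][[X]]$ (or over a suitable completion), noting that each local factor is $1 + O(q)$ in the $X^2$-adic and $q$-adic sense so the product converges formally, and that $\arcsin(X/2) = X/2 + O(X^3)$ is a well-defined element of $X\cdot\Q[[X^2]]$ so all composed series are legitimate. I would remark that $\sin^2(d\theta)$, as a power series in $\theta$ with the substitution $\theta=\arcsin(X/2)$, reproduces a polynomial-growth-free power series in $X$, so reindexing by the pair $(d, n=md)$ to pass from the $d$-sum to the divisor sum $\sigma_{S,N,\epsilon,2j-1}(n)$ is valid coefficientwise in $q$. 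Once these formal manipulations are in place, matching coefficients of $X^{2k}$ on both sides yields the theorem, and the final sentence of \cref{thm:MacMahon-Lehmer} (quasi-modularity when $S$ is symmetric) follows because then $\sigma_{S,N,\epsilon,k}$ is, up to elementary manipulations, a twisted divisor sum whose generating function $G_{S,N,\epsilon,2j}(q)$ is a quasi-modular Eisenstein series of level $N$, as recorded in \cref{prop:Eisenstein-modular}.
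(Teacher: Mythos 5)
Your proposal is correct, and it reaches the theorem by a genuinely different route at the computational heart of the argument. Both you and the paper start from the same product formula $1+\sum_{k\ge1}A_{S,N,\epsilon,k}(q)X^{2k}=\prod_{0<m\in S}\bigl(1+\tfrac{\epsilon q^m}{(1-\epsilon q^m)^2}X^2\bigr)$ and both end by expanding $\sin^2(d\theta)$ (equivalently $\cos(2d\theta)-1$) in powers of $\theta=\arcsin(X/2)$, reindexing $(m,d)\mapsto n=md$ to produce $G_{S,N,\epsilon,2j}(q)$, and comparing with \cref{def:Lehmer-poly}. The difference is how the logarithm of the product is converted into the series $\sum_{m}\sum_{d}\tfrac{4\epsilon^d q^{md}}{d}\sin^2(d\theta)$. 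The paper expands $\log(1+x)$ termwise, invokes the Fourier expansion of $(2\sin\theta)^{2n}$ in $\cos(2k\theta)$, and must then evaluate the resulting Fourier coefficients, which requires the nontrivial binomial identity \eqref{eq:refinement}, proved by a constant-term extraction from two binomial series. You instead treat each local factor separately and use the factorization $(1-t)^2+tX^2=1-2t\cos 2\theta+t^2=(1-te^{2i\theta})(1-te^{-2i\theta})$ with $t=\epsilon q^m$, so that the logarithm of each factor is read off directly from $-\log(1-u)=\sum_{d\ge1}u^d/d$; this is the classical expansion $-\log(1-2t\cos\phi+t^2)=2\sum_{d\ge1}\tfrac{t^d}{d}\cos(d\phi)$, and it bypasses the Fourier-coefficient computation and the binomial identity entirely. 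Your bookkeeping is sound: the single-slot identity is a two-variable formal identity in $t$ and $X$, the substitution $t=\epsilon q^m$ and the product over $m$ converge $q$-adically, and the coefficient of $X^{2k}$ in the product is exactly $A_{S,N,\epsilon,k}(q)$ by \cref{def:ASNk}. What your approach buys is brevity and transparency (the only inputs are the geometric-series log and $\cos2\theta=1-2\sin^2\theta$); what the paper's approach buys is an explicit Fourier-analytic computation of the coefficients $a_k$, including the identity \eqref{eq:refinement}, which may be of independent combinatorial interest but is not needed for the theorem itself.
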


\begin{proof}
	As MacMahon~\cite{MacMahon1920} showed in the simplest case, and as is clear from the definition of $A_{S,N,\epsilon,k}(q)$, the following holds:
	\begin{align*}
		1 + \sum_{k=1}^\infty A_{S,N,\epsilon,k}(q) (2\sin \theta)^{2k} = \prod_{0 < m \in S} \left(1 + \frac{\epsilon q^m}{(1- \epsilon q^m)^2} (2\sin \theta)^2 \right),
	\end{align*}
	where $0 < m \in S$ indicates that $m$ runs over the positive integers whose images in $\Z/N\Z$ lie in $S$. Taking the logarithm of both sides yields,
	\[
		\log \left(1 + \sum_{k=1}^\infty A_{S,N,\epsilon,k}(q) (2\sin \theta)^{2k} \right) = - \sum_{0 < m \in S} \sum_{n=1}^\infty \frac{(-1)^n}{n} \frac{\epsilon^n q^{mn}}{(1-\epsilon q^m)^{2n}} (2\sin \theta)^{2n}.
	\]
	This function is an even function (in $\theta$) with period $\pi$, implying that it has a Fourier series expansion of the form
	\begin{align}\label{eq:log-FourierN}
		- \sum_{0 < m \in S} \sum_{n=1}^\infty \frac{(-1)^n}{n} \frac{\epsilon^n q^{mn}}{(1-\epsilon q^m)^{2n}} (2\sin \theta)^{2n} = a_0 + \sum_{k=1}^\infty a_k \cos (2k \theta).
	\end{align}
	Since the following formula is known:
	\[
		(2\sin \theta)^{2n} = \binom{2n}{n} + 2 \sum_{k=1}^n (-1)^k \binom{2n}{n-k} \cos (2k \theta),
	\]
	(see, for instance,~\cite[Appendix to Chapter XI, p.471]{MOS1966}), the left-hand side of \eqref{eq:log-FourierN} can be expressed as
	\[
		- \sum_{0 < m \in S} \sum_{n=1}^\infty \frac{(-1)^n}{n} \frac{\epsilon^n q^{mn}}{(1-\epsilon q^m)^{2n}} \binom{2n}{n} - 2\sum_{0<m \in S} \sum_{n=1}^\infty \frac{(-1)^n}{n} \frac{\epsilon^n q^{mn}}{(1-\epsilon q^m)^{2n}} \sum_{k=1}^n (-1)^k \binom{2n}{n-k} \cos (2k \theta).
	\]
	
	First, to compute the constant term $a_0$, note that
	\[
		-\sum_{n=1}^\infty \frac{(-1)^n}{n} \binom{2n}{n} X^n = 2 \log \frac{1 + \sqrt{1+4X}}{2},
	\]
	which can be derived from the binomial theorem $(1-4X)^{-1/2} = \sum_{n=0}^\infty \binom{2n}{n} X^n$. The constant term is therefore
	\begin{align}\label{eq:constant-term}
	\begin{split}
		a_0 &= - \sum_{0 < m \in S} \sum_{n=1}^\infty \frac{(-1)^n}{n} \binom{2n}{n} \left(\frac{\epsilon q^m}{(1-\epsilon q^m)^2}\right)^n = -2 \sum_{0 < m \in S} \log(1-\epsilon q^m)\\
			&= 2\sum_{0<m \in S} \sum_{l=1}^\infty \frac{\epsilon^l q^{lm}}{l}.
	\end{split}
	\end{align}
	
	Next, for $k \ge 1$, we prove the equation
	\begin{align}\label{eq:Fourier-positive}
		a_k = -2 \sum_{0<m \in S} \sum_{n=k}^\infty \frac{(-1)^{n-k}}{n} \frac{\epsilon^n q^{mn}}{(1-\epsilon q^m)^{2n}} \binom{2n}{n-k} = -2 \sum_{0 < m \in S} \frac{\epsilon^k q^{km}}{k}.
	\end{align}
	As a refinement, we now show that
	\begin{align}\label{eq:refinement}
		\sum_{n=k}^\infty \frac{(-1)^{n-k}}{n} \frac{q^n}{(1-q)^{2n}} \binom{2n}{n-k} = \frac{q^k}{k}.
	\end{align}
	Replacing $q$ with $\epsilon q^m$ and taking the sum of both sides over $0 < m \in S$, we obtain \eqref{eq:Fourier-positive}. By applying the binomial expansion to $(1-q)^{-2n}$, the left-hand side of \eqref{eq:refinement} becomes
	\begin{align*}
		\sum_{n=k}^\infty \frac{(-1)^{n-k}}{n} \frac{q^n}{(1-q)^{2n}} \binom{2n}{n-k} &= \sum_{n=k}^\infty \frac{(-1)^{n-k}}{n} \binom{2n}{n-k} \sum_{l=0}^\infty \binom{l+2n-1}{2n-1} q^{n+l}\\
			&= \frac{q^k}{k} + \sum_{N=k+1}^\infty \left(\sum_{n=k}^N \frac{(-1)^{n-k}}{n} \binom{2n}{n-k} \binom{N+n-1}{2n-1} \right) q^N.
	\end{align*}
	We show that the coefficient of $q^N$ vanishes when $N > k$. By a direct calculation, we obtain
	\begin{align}\label{eq:Nth-coeff-N}
		\sum_{n=k}^N \frac{(-1)^{n-k}}{n} \binom{2n}{n-k} \binom{N+n-1}{2n-1} &= \frac{2}{N-k} \sum_{n=k}^N (-1)^{n-k} \binom{N-k}{n-k} \binom{N+n-1}{N-k-1}.
	\end{align}
	By multiplying two binomial expansions
	\begin{align*}
		\frac{1}{q^k (1-q)^{N-k}} &= \sum_{n=-k}^\infty \binom{N+n-1}{N-k-1} q^n
	\end{align*}
	and
	\begin{align*}
		q^{-k} (1-q^{-1})^{N-k} &= \sum_{n=k}^N (-1)^{n-k} \binom{N-k}{n-k} q^{-n},
	\end{align*}
	we find that the constant term of $2/(N-k) (-1)^{N-k} q^{-N-k}$ is equal to the right-hand side of \eqref{eq:Nth-coeff-N}. Since this is zero, the desired claim~\eqref{eq:refinement} is proven.
	
	Finally, by combining \eqref{eq:constant-term} and \eqref{eq:Fourier-positive} with \eqref{eq:log-FourierN}, we obtain the Fourier series expansion
	\[
		\log \left(1 + \sum_{k=1}^\infty A_{S,N,\epsilon,k}(q) (2\sin \theta)^{2k} \right) = - 2 \sum_{k=1}^\infty \sum_{0 < m \in S} \frac{\epsilon^k q^{km}}{k} (\cos(2k\theta) -1),
	\]
	which is calculated as
	\begin{align*}
		&= -2 \sum_{k=1}^\infty \sum_{0 < m \in S} \frac{\epsilon^k q^{km}}{k} \sum_{j=1}^\infty \frac{(-1)^j}{(2j)!} (2k \theta)^{2j} = 2 \sum_{j=1}^\infty \frac{(-1)^{j-1}}{(2j)!} \left(\sum_{k=1}^\infty \sum_{0<m \in S} \epsilon^k k^{2j-1} q^{km} \right) (2\theta)^{2j}\\
		&= 2 \sum_{j=1}^\infty \frac{(-1)^{j-1}}{(2j)!} G_{S,N,\epsilon,2j}(q) (2\theta)^{2j}.
	\end{align*}
	Comparing it with \cref{def:Lehmer-poly} concludes the proof.
\end{proof}

\begin{remark}
	\cref{thm:ASNk-poly} generalizes Bachmann's generating series:
	\[
		1 + \sum_{k=1}^\infty A_k(q) X^{2k} = \exp \left(2\sum_{j=1}^\infty \frac{(-1)^{j-1}}{(2j)!} G_{2j}(q) \left(2 \arcsin \frac{X}{2}\right)^{2j} \right),
	\]
	where we put
	\begin{align}\label{eq:level-1-Eisen}
		G_k(q) \coloneqq G_{\{0\}, 1, 1, k}(q) = \sum_{n=1}^\infty \sigma_{k-1}(n) q^n = - \frac{B_k}{2k}(E_k(z) - 1).
	\end{align}
	It is important to highlight that Bachmann's result in~\cite{Bachmann2024} was established using the theory of multiple Eisenstein series. Additionally, Bachmann derived a similar generating series for $C_k(q)$ by utilizing ``odd multiple Eisenstein series".
\end{remark}

\begin{remark}
	Rose~\cite[Theorem 1.11 and 1.12]{Rose2015} computed the weight $2w$ part of the function $A_{S,N,\epsilon,k}(q)$ in terms of derivatives of theta functions. In contrast, our polynomials $\Lambda_k(x_2, x_4, \dots, x_{2k})$ provide simpler and unified expressions by assigning weight $kl$ to $x_k^l$. Notably, Rose's results require case distinctions depending on whether $S$ includes $0$, whereas our expression avoids such distinctions.
\end{remark}

% --------------------------------------------------------------------------
\subsection{Rose's theorem on quasi-modularity}
% --------------------------------------------------------------------------

As a direct consequence of \cref{thm:ASNk-poly}, we reproduce Rose's result. Moreover, although Larson~\cite{Larson2015} did not determine the group in the case $\epsilon = -1$, this also follows immediately. Since the generalized MacMahon function $A_{S,N,\epsilon,k}(q)$ is expressed as a polynomial in the Eisenstein series $G_{S,N,\epsilon,2j}(q)$, the problem reduces to identifying the modular properties of the Eisenstein series. 

\begin{proposition}\label{prop:Eisenstein-modular}
	For $\epsilon = 1$ and a positive even integer $k$, we have the following.
	\begin{enumerate}
		\item If $S = \{0\}$, then $G_{S, N, 1, k}(q) \in \widetilde{\calM}(\Gamma_0(N))$.
		\item If $N>1$ and $S = \{n \in \Z/N\Z : (n,N) = 1\}$, then $G_{S, N, 1, k}(q) \in \widetilde{\calM}_k(\Gamma_0(N))$.
		\item If $S = \{-l, l\}$ for some $l \in \Z/N\Z \setminus \{0\}$, then $G_{S, N, 1, k}(q) \in \widetilde{\calM}_k(\Gamma_1(N))$.
	\end{enumerate}
\end{proposition}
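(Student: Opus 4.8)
The plan is to write each series $G_{S,N,1,k}(q)$ explicitly as a $\C$-linear combination of classical Eisenstein series, from which the stated (quasi-)modularity is immediate; this is the elementary input that replaces Rose's more involved argument. Two preliminary facts will be used repeatedly. First, the scaling operator $V_d \colon f(z) \mapsto f(dz)$ (that is, $q \mapsto q^d$ on $q$-expansions) sends $\widetilde{\calM}_k(\Gamma_0(M))$ into $\widetilde{\calM}_k(\Gamma_0(dM))$ and $\widetilde{\calM}_k(\Gamma_1(M))$ into $\widetilde{\calM}_k(\Gamma_1(dM))$; this is a routine check from the transformation law of $f$ together with $dM \mid c$ for every $\smat{a&b\\c&d} \in \Gamma_0(dM)$. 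Second, for each even integer $k \ge 2$ we have, by \eqref{eq:level-1-Eisen}, $G_k(q^d) = -\tfrac{B_k}{2k}\bigl(E_k(dz) - 1\bigr)$ with $E_k(dz) \in \widetilde{\calM}_k(\Gamma_0(d))$: for $k \ge 4$ this is the modularity of the stretched level-$1$ Eisenstein series, and for $k = 2$ it follows from the relation $E_2(z) - d\,E_2(dz) \in \calM_2(\Gamma_0(d))$ when $d > 1$ (and $E_2 \in \widetilde{\calM}_2(\SL_2(\Z))$ when $d = 1$).

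For (1), a direct manipulation of the divisor sum gives $\sigma_{\{0\},N,1,k-1}(n) = \sigma_{k-1}(n/N)$ if $N \mid n$ and $0$ otherwise, so that $G_{\{0\},N,1,k}(q) = G_k(q^N) = -\tfrac{B_k}{2k}\bigl(E_k(Nz) - 1\bigr)$. Inside the algebra $\widetilde{\calM}(\Gamma_0(N))$ this is the sum of a weight-$k$ element and a nonzero weight-$0$ element, hence a mixed-weight but not homogeneous quasi-modular form --- which is precisely why (1) is stated with $\widetilde{\calM}(\Gamma_0(N))$ rather than $\widetilde{\calM}_k(\Gamma_0(N))$. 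For (2), the indicator of $(\Z/N\Z)^\times$ is the principal character $\chi_0$ modulo $N$, and writing $\chi_0(m) = \sum_{f \mid \gcd(m,N)} \mu(f)$ and reorganizing the divisor sum yields $\sigma_{S,N,1,k-1}(n) = \sum_{f \mid \gcd(n,N)} \mu(f)\,\sigma_{k-1}(n/f)$, hence $G_{S,N,1,k}(q) = \sum_{f \mid N} \mu(f)\,G_k(q^f) = -\tfrac{B_k}{2k}\sum_{f \mid N} \mu(f)\,E_k(fz)$, where the constant terms cancel since $\sum_{f \mid N} \mu(f) = 0$ for $N > 1$. As each $E_k(fz) \in \widetilde{\calM}_k(\Gamma_0(f)) \subseteq \widetilde{\calM}_k(\Gamma_0(N))$, this lies in $\widetilde{\calM}_k(\Gamma_0(N))$ --- now with a genuine weight, because the constant term is gone.

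For (3) I would first reduce to $\gcd(l,N) = 1$: writing $g = \gcd(l,N)$, $N = g N_0$, $l = g l_0$ with $\gcd(l_0,N_0) = 1$ (so $N_0 \ge 2$, as $l \ne 0$), the condition $e \bmod N \in \{l,-l\}$ on a positive integer $e$ is equivalent to $g \mid e$ together with $(e/g) \bmod N_0 \in \{l_0,-l_0\}$, so substituting $e \mapsto g e$ gives $G_{\{l,-l\},N,1,k}(q) = \bigl(V_g\, G_{\{l_0,-l_0\},N_0,1,k}\bigr)(z)$, and by the remark on $V_g$ it suffices to treat $\gcd(l,N) = 1$. In that case $l$ and $-l$ are units modulo $N$, so by the orthogonality relations and $\overline{\chi}(-l) = \chi(-1)\overline{\chi}(l)$ the function $m \mapsto \mathbf 1_{m \bmod N \in \{l,-l\}}$ is a $\C$-linear combination of the Dirichlet characters $\chi$ modulo $N$ with $\chi(-1) = 1$ (the identity is valid for all $m \in \Z$, both sides vanishing when $\gcd(m,N) > 1$). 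Substituting this decomposition into the divisor sum writes $G_{\{l,-l\},N,1,k}(q)$ as a $\C$-linear combination of the twisted Eisenstein series $\mathscr{E}_{k,\chi}(q) \coloneqq \sum_{n \ge 1}\bigl(\sum_{d \mid n}\chi(n/d)\,d^{k-1}\bigr)q^n$ over such $\chi$, and the classical theory of Eisenstein series on congruence subgroups (see, e.g., \cite[Chapter 4]{DiamondShurman2005}) gives $\mathscr{E}_{k,\chi} \in \widetilde{\calM}_k(\Gamma_1(N))$ for every even $k \ge 2$ and every $\chi$ modulo $N$ with $\chi(-1) = 1$ --- indeed $\mathscr{E}_{k,\chi} \in \calM_k(\Gamma_1(N))$ except when $k = 2$ and $\chi$ is principal, in which case $\mathscr{E}_{k,\chi}$ is exactly the quasi-modular form produced in (2). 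Hence $G_{\{l,-l\},N,1,k}(q) \in \widetilde{\calM}_k(\Gamma_1(N))$. No step here is deep; the argument is bookkeeping with divisor sums plus standard Eisenstein series theory, and the points needing care are the constant-term tracking that distinguishes $\widetilde{\calM}$ from $\widetilde{\calM}_k$, the rescaling reduction in (3) to a unit $l$, and the precise form of the cited fact about $\mathscr{E}_{k,\chi}$ --- especially the genuine failure of modularity when $k = 2$ and $\chi$ is principal.
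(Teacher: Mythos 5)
Your proof is correct, and parts (1) and (2) coincide with the paper's argument essentially verbatim (the $V_N$ rescaling of $E_k$ together with $E_{2,N} = E_2 - NE_2(N\cdot) \in \calM_2(\Gamma_0(N))$ for the $k=2$ case, and the M\"obius/inclusion--exclusion identity $G_{S,N,1,k} = -\tfrac{B_k}{2k}\sum_{f \mid N}\mu(f)E_k(fz)$ with the constant terms cancelling). For part (3) you take a genuinely different route. The paper works with the weight-$k$ Eisenstein series attached to the lattice vector $v=(l,0)\in(\Z/N\Z)^2$, i.e.\ $\sum_{(m,n)\equiv(l,0)}(mNz+n)^{-k}$, identifies its Fourier expansion with $G_{\{-l,l\},N,1,k}$, and verifies $\Gamma_1(N)$-invariance directly via the change of variables $(m',n')=(m,n)\smat{a & Nb \\ c/N & d}$ (invoking \cite[Section 4.6]{DiamondShurman2005} for $k=2$); it deals with $\gcd(l,N)>1$ only by remarking that the order-$N$ hypothesis in \cite{DiamondShurman2005} can be dropped. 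You instead reduce to $\gcd(l,N)=1$ by pulling out $g=\gcd(l,N)$ through the operator $V_g$ (a correct and clean reduction, with $N_0\ge 2$ guaranteed by $l\ne 0$), then expand the indicator of $\{l,-l\}$ over even Dirichlet characters and land in the span of the character-twisted Eisenstein series $\mathscr{E}_{k,\chi}$, citing their membership in $\calM_k(\Gamma_1(N))$ (quasi-modular only for $k=2$, $\chi$ principal --- which does occur here, since $\chi_0$ appears with nonzero coefficient $2/\phi(N)$, consistent with the statement). The two arguments draw on adjacent sections of the same reference and are comparably deep; yours trades the explicit transformation-law computation for the imprimitive-to-primitive bookkeeping implicit in the claim about $\mathscr{E}_{k,\chi}$, and its $\gcd$ reduction is arguably tidier than the paper's parenthetical remark about vectors not of order $N$.
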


The assertion (3) corresponds to the results of Rose and Larson. To verify this proposition, we recall well-known claims.

\begin{lemma}[{\cite[Proposition 7.3.3]{CohenStromberg2017} and \cite[Exercise 1.2.8 (e)]{DiamondShurman2005}}]\label{lem:CS}
	We have the following.
	\begin{enumerate}
		\item[(a)] If $f \in \calM_k(\Gamma_0(N_1))$, then $f(N_2 z) \in \calM_k(\Gamma_0(N_1 N_2))$, and similarly for $\Gamma_1$ instead of $\Gamma_0$.
		\item[(b)] We have $E_{2,N}(z) \coloneqq E_2(z) - N E_2(Nz) \in \calM_2(\Gamma_0(N))$.
	\end{enumerate}
\end{lemma}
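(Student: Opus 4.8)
The plan is to show that the scaling map $z \mapsto N_2 z$ \emph{conjugates} the $\Gamma_0(N_1 N_2)$-action into the $\Gamma_0(N_1)$-action, so that the transformation law for $f$ can be transported verbatim. Concretely, set $g(z) \coloneqq f(N_2 z)$ and take $\gamma = \smat{a & b \\ c & d} \in \Gamma_0(N_1 N_2)$, writing $c = N_1 N_2 c'$. The key algebraic observation is the identity
\[
	N_2 \cdot \frac{az + b}{cz + d} = \gamma' \cdot (N_2 z), \qquad \gamma' \coloneqq \smat{a & N_2 b \\ c/N_2 & d},
\]
where $c/N_2 = N_1 c'$ is an integer divisible by $N_1$ and $\det \gamma' = ad - bc = 1$, so that $\gamma' \in \Gamma_0(N_1)$. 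Applying the weight-$k$ transformation law of $f$ under $\gamma'$ at the point $N_2 z$, and using the clean cancellation $(c/N_2)(N_2 z) + d = cz + d$, yields $g(\gamma z) = (cz+d)^k g(z)$. For the $\Gamma_1$ variant, I would simply note that $\gamma'$ carries the \emph{same} diagonal $(a,d)$ as $\gamma$, so the congruences $a \equiv d \equiv 1 \pmod{N_1}$ are inherited from $a \equiv d \equiv 1 \pmod{N_1 N_2}$, placing $\gamma'$ in $\Gamma_1(N_1)$.

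\textbf{Part (b): cancellation of the quasi-modular anomaly.} Here the plan rests on the quasi-modular transformation law of $E_2$, namely
\[
	E_2(\gamma z) = (cz+d)^2 E_2(z) - \frac{6ic}{\pi}(cz+d) \qquad \bigl(\gamma = \smat{a & b \\ c & d} \in \SL_2(\Z)\bigr),
\]
which I would derive from the automorphy of the nonholomorphic completion $E_2^*(z) = E_2(z) - 3/(\pi \Im z)$ together with $\Im(\gamma z) = \Im(z)/|cz+d|^2$. Given $\gamma \in \Gamma_0(N)$ with $c = Nc'$, the same scaling trick as in part (a) gives $N\gamma z = \gamma''(Nz)$ with $\gamma'' = \smat{a & Nb \\ c' & d} \in \SL_2(\Z)$. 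Applying the $E_2$-law under $\gamma''$ at the point $Nz$, and using $c'(Nz) + d = cz + d$, produces $E_2(N\gamma z) = (cz+d)^2 E_2(Nz) - \frac{6ic'}{\pi}(cz+d)$. Multiplying by $N$ and subtracting from $E_2(\gamma z)$, the two anomalous terms $-\frac{6ic}{\pi}(cz+d)$ and $-N\cdot\frac{6ic'}{\pi}(cz+d) = -\frac{6ic}{\pi}(cz+d)$ cancel exactly — this cancellation is precisely the arithmetic identity $Nc' = c$ — leaving $E_{2,N}(\gamma z) = (cz+d)^2 E_{2,N}(z)$, the weight-$2$ automorphy on $\Gamma_0(N)$.

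\textbf{Holomorphy at the cusps.} Holomorphy on $\bbH$ is immediate in both parts, since $z \mapsto N_2 z$ and $z \mapsto Nz$ preserve $\bbH$ and $f$, $E_2$ are holomorphic there. It remains to check boundedness at each cusp. For (a), every cusp of $\Gamma_0(N_1 N_2)$ pulls back under $z \mapsto N_2 z$ to a cusp of $\Gamma_0(N_1)$, where $f$ is already holomorphic, so the local $q$-expansion of $g$ has no negative powers. For (b), I would read off the $q$-expansion $E_{2,N}(z) = (1-N) - 24\sum_{n\ge 1}\bigl(\sigma_1(n) - N\sigma_1(n/N)\bigr)q^n$ (with $\sigma_1(n/N) = 0$ unless $N \mid n$) to see holomorphy at $\infty$, and deduce boundedness at the remaining cusps from the now-established weight-$2$ automorphy combined with the boundedness of $E_2$ in vertical strips.

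\textbf{Main obstacle.} The only genuinely nontrivial ingredient is the $E_2$ transformation law invoked in (b); everything else is bookkeeping with the conjugating matrices $\gamma'$, $\gamma''$ and the cusp conditions. I expect the subtle point to be verifying the cusp behavior in (b) cleanly, and I would emphasize that the exact anomaly cancellation identifies level $N$ — rather than any proper divisor — as the natural group for $E_{2,N}$.
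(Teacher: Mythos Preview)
The paper does not supply its own proof of this lemma; it is quoted directly from the cited textbooks (Cohen--Str\"{o}mberg, Proposition~7.3.3, and Diamond--Shurman, Exercise~1.2.8(e)) and used as a black box. Your proposal is essentially the standard textbook argument and is correct: the conjugation identity $N_2\gamma z=\gamma'(N_2z)$ for part~(a) and the anomaly-cancellation identity $Nc'=c$ for part~(b) are exactly the mechanisms those references use. The one place where your write-up is a little thin is the cusp condition in~(b); a cleaner route than ``boundedness of $E_2$ in vertical strips'' is to note that $E_{2,N}(z)=E_2^*(z)-NE_2^*(Nz)$ identically (the $3/(\pi\Im z)$ terms cancel), so $E_{2,N}$ is a difference of genuine weight-$2$ invariants whose growth at every cusp is already controlled, and holomorphy then follows because $E_{2,N}$ itself is holomorphic on $\bbH$ with a convergent $q$-expansion.
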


\begin{proof}[Proof of \cref{prop:Eisenstein-modular}]
	(1) Since $\sigma_{\{0\}, N, 1, k}(n) = \sigma_{\{0\}, 1, 1, k}(n/N)$, it follows that
	\[
		G_{\{0\},N,1,k}(q) = G_{\{0\}, 1, 1, k}(q^N) = - \frac{B_k}{2k} (E_k(Nz) - 1).
	\]
	Here, we put $\sigma_{S,N,\epsilon,k}(x) = 0$ for any $x \not\in\Z$. By \cref{lem:CS} (a), we obtain that $E_k(Nz) \in \calM_k(\Gamma_0(N))$ for $k \ge 4$. For $k=2$, \cref{lem:CS} (b) implies that 
	\begin{align}\label{eq:E2N-quasi}
		E_2(Nz) = \frac{1}{N}(E_2(z) - E_{2,N}(z)) \in \widetilde{\calM}_2(\Gamma_0(N)).
	\end{align}
	Therefore, for any positive even $k$, we conclude that $G_{\{0\}, N, 1, k}(q) \in \widetilde{\calM}(\Gamma_0(N))$.
	
	(2) By the inclusion-exclusion principle, we derive
	\[
		\sum_{\substack{d \mid n \\ (n/d, N)=1}} d^{k-1} = \sum_{l \mid N} \mu(l) \sum_{\substack{d \mid n \\ n/d \equiv 0\ (l)}} d^{k-1},
	\]
	which implies the equation
	\begin{align}\label{eq:GS=Moebius}
		G_{S, N, 1, k}(q) = -\frac{B_k}{2k} \sum_{l \mid N} \mu(l) (E_k(lz) - 1) = -\frac{B_k}{2k} \sum_{l \mid N} \mu(l) E_k(lz),
	\end{align}
	where we used the property of the M\"{o}bius function $\mu(n)$ that $\sum_{l \mid N} \mu(l) = 0$ for $N > 1$. Observing that $\widetilde{\calM}_k(\Gamma_0(l)) \subset \widetilde{\calM}_k(\Gamma_0(N))$ for $l \mid N$, we conclude that $G_{S,N,1,k}(q) \in \widetilde{\calM}_k(\Gamma_0(N))$. 
	
	(3) By applying~\cite[Theorem 4.2.3]{DiamondShurman2005} to the vector $v = (l, 0) \in (\Z/N\Z)^2$, for an even $k \ge 4$, we have
	\[
		\sum_{\substack{(m,n) \in \Z^2 \\ (m,n) \equiv (l, 0) \pmod{N}}} \frac{1}{(m \cdot Nz+n)^k} = c \cdot G_{\{-l, l\}, N, 1, k}(q)
	\]
	for a certain (explicit) constant $c \in \R$. Note that it is assumed in \cite[Section 4.2]{DiamondShurman2005} that $v \in (\Z/N\Z)^2$ is of order $N$. However, even if $v$ is not of order $N$, the same argument still holds. For instance, it is also helpful to refer to \cite[Chapter III-3, Proposition 22]{Koblitz1993}. For any $\gamma = \smat{a & b \\ c & d} \in \Gamma_1(N)$, it can be verified that
	\[
		\frac{1}{(cz+d)^k} \sum_{\substack{(m,n) \in \Z^2 \\ (m,n) \equiv (l, 0) \pmod{N}}} \frac{1}{(mN\frac{az+b}{cz+d}+n)^k} = \sum_{\substack{(m',n') \in \Z^2 \\ (m',n') \equiv (l, 0) \pmod{N}}} \frac{1}{(m'Nz+n')^k},
	\]
	where we changed variables via $(m', n') = (m, n) \smat{a & Nb \\ c/N & d}$. Hence, $G_{\{-l,l\}, N, 1, k}(q) \in \calM_k(\Gamma_1(N))$ for any even $k \ge 4$. For $k=2$, by similarly applying the argument in~\cite[Section 4.6]{DiamondShurman2005} to the vector $v = (l,0)$ and using the above reasoning, it can be verified that $G_{\{-l, l\}, N, 1, 2}(q) \in \widetilde{\calM}_2(\Gamma_1(N))$. Thus, the proof is complete.
\end{proof}

\begin{corollary}\label{cor:epsil1}
	For $\epsilon = 1$ and a positive integer $k$, the generalized MacMahon functions $A_{S,N, 1, k}(q)$ are quasi-modular forms in the following cases.
	\begin{enumerate}
		\item If $S = \{0\}$, then $A_{S,N,1,k}(q) \in \widetilde{\calM}(\Gamma_0(N))$.
		\item If $S = \{n \in \Z/N\Z : (n,N) = 1\}$, then $A_{S,N,1,k}(q) \in \widetilde{\calM}(\Gamma_0(N))$.
		\item If $S$ is symmetric, that is, $l \in S$ implies $-l \in S$, then $A_{S,N,1,k}(q) \in \widetilde{\calM}(\Gamma_1(N))$.
	\end{enumerate}
\end{corollary}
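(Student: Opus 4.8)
The plan is to read the corollary off from \cref{thm:ASNk-poly} and \cref{prop:Eisenstein-modular}, using only the elementary fact that the space of quasi-modular forms on a fixed congruence subgroup is a ring. By \cref{thm:ASNk-poly},
\[
	A_{S,N,1,k}(q) = \Lambda_k\bigl(G_{S,N,1,2}(q), G_{S,N,1,4}(q), \dots, G_{S,N,1,2k}(q)\bigr),
\]
so $A_{S,N,1,k}(q)$ is a polynomial with rational coefficients in the Eisenstein series $G_{S,N,1,2j}(q)$, $1 \le j \le k$. It therefore suffices to check (i) that each such Eisenstein series lies in the relevant $\widetilde{\calM}(\Gamma)$, and (ii) that $\widetilde{\calM}(\Gamma)$ is closed under addition and multiplication and contains $\Q$.

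For (ii) I would argue directly from the definition $\widetilde{\calM}(\Gamma) = \bigl(\bigoplus_{k \in \Z} \calM_k(\Gamma)\bigr) \otimes \C[E_2]$: the graded object $\bigoplus_k \calM_k(\Gamma)$ is a $\C$-algebra, since the product of a weight $k$ and a weight $\ell$ modular form on $\Gamma$ is a weight $k+\ell$ modular form on $\Gamma$, and tensoring with the polynomial ring $\C[E_2]$ keeps it a $\C$-algebra. In particular $\widetilde{\calM}(\Gamma)$ is a commutative ring containing $\C \supset \Q$, so every $\Q$-polynomial in elements of $\widetilde{\calM}(\Gamma)$ again belongs to $\widetilde{\calM}(\Gamma)$.

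For (i), the three cases of the corollary match the three cases of \cref{prop:Eisenstein-modular}. In case (1), $S=\{0\}$ and \cref{prop:Eisenstein-modular}(1) gives $G_{\{0\},N,1,2j}(q) \in \widetilde{\calM}(\Gamma_0(N))$ for every $j$, whence $A_{\{0\},N,1,k}(q) \in \widetilde{\calM}(\Gamma_0(N))$ by (ii); case (2) is identical, invoking \cref{prop:Eisenstein-modular}(2). For case (3), given a symmetric $S$, I would write it as a disjoint union of the self-paired elements among $\{0, N/2\}$ (the element $N/2$ occurring only when $N$ is even) together with two-element orbits $\{-l,l\}$ with $l \ne 0, N/2$. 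Since the divisor sum $\sigma_{S,N,1,k}(n)$ is additive over disjoint decompositions of $S$, each $G_{S,N,1,2j}(q)$ is the corresponding finite sum of Eisenstein series, and each summand lies in $\widetilde{\calM}(\Gamma_1(N))$: for the orbits $\{-l,l\}$ (as well as the singleton $\{N/2\} = \{-N/2, N/2\}$) by \cref{prop:Eisenstein-modular}(3), and for the summand $\{0\}$ by \cref{prop:Eisenstein-modular}(1) together with the inclusion $\widetilde{\calM}(\Gamma_0(N)) \subset \widetilde{\calM}(\Gamma_1(N))$. Then (ii) again concludes.

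Beyond assembling these ingredients the argument is formal, so I do not expect a genuine obstacle; the only points meriting a sentence of care are the decomposition of a general symmetric $S$ into the basic symmetric sets covered by \cref{prop:Eisenstein-modular}, and the observation that the conclusion is necessarily a \emph{mixed-weight} statement — one gets $A_{S,N,1,k}(q) \in \widetilde{\calM}(\Gamma)$ rather than membership in a single $\widetilde{\calM}_k(\Gamma)$ — because $\Lambda_k$ is not homogeneous for the grading $\deg x_{2j} = 2j$, as already visible in $\Lambda_2(x_2,x_4) = \tfrac{1}{12}(6x_2^2 + x_2 - x_4)$.
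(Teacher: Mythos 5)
Your proposal is correct and follows essentially the same route as the paper: express $A_{S,N,1,k}(q)$ via \cref{thm:ASNk-poly} as a polynomial in the $G_{S,N,1,2j}(q)$, invoke \cref{prop:Eisenstein-modular} case by case, and for symmetric $S$ decompose it into a disjoint union of sets $\{-l,l\}$ with $0 \le l \le N/2$. Your extra care with the self-paired elements $l=0$ (handled via \cref{prop:Eisenstein-modular}(1) and $\widetilde{\calM}(\Gamma_0(N)) \subset \widetilde{\calM}(\Gamma_1(N))$) and $l=N/2$ is a welcome clarification of a point the paper leaves implicit, but not a different argument.
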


\begin{proof}
	The first two assertions immediately follow from \cref{thm:ASNk-poly} and \cref{prop:Eisenstein-modular}. As for the last assertion, note that any symmetric subset $S$ can be expressed as a disjoint union of subsets of the form $\{-l, l\}$ for some $0 \le l \le N/2$. This decomposition ensures that $G_{S,N,\epsilon,k}(q) \in \widetilde{\calM}(\Gamma_1(N))$, thereby establishing the desired result.
\end{proof}

For the case $\epsilon = -1$, it suffices to focus on the following equality:
\begin{align*}
	\sigma_{S,N,-1,k-1}(n) = 2^k \sigma_{S,N,1,k-1}(n/2) - \sigma_{S,N,1,k-1}(n).
\end{align*}
This implies that
\begin{align}\label{eq:epsil-1}
	G_{S,N,-1,k}(q) = 2^k G_{S,N,1,k}(q^2) - G_{S,N,1,k}(q)
\end{align}
and the following:

\begin{corollary}
	For $\epsilon = -1$ and a positive integer $k$, the generalized MacMahon functions $A_{S,N, -1, k}(q)$ are quasi-modular forms in the following cases.
	\begin{enumerate}
		\item If $S = \{0\}$, then $A_{S,N,-1,k}(q) \in \widetilde{\calM}(\Gamma_0(2N))$.
		\item If $S = \{n \in \Z/N\Z : (n,N) = 1\}$, then $A_{S,N,-1,k}(q) \in \widetilde{\calM}(\Gamma_0(2N))$.
		\item If $S$ is symmetric, then $A_{S,N,-1,k}(q) \in \widetilde{\calM}(\Gamma_1(2N))$.
	\end{enumerate}
\end{corollary}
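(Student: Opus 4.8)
The plan is to follow the same route as for the two preceding corollaries. By \cref{thm:ASNk-poly}, $A_{S,N,-1,k}(q) = \Lambda_k(G_{S,N,-1,2}(q),\dots,G_{S,N,-1,2k}(q))$ is a polynomial with rational coefficients in the Eisenstein series $G_{S,N,-1,2j}(q)$ for $1 \le j \le k$. Since $\widetilde{\calM}(\Gamma)$ is a $\C$-algebra — the product of a weight $a$ and a weight $b$ modular form on $\Gamma$ is a weight $a+b$ modular form, so $\bigoplus_{a} \calM_a(\Gamma)$ is a graded ring, and $\widetilde{\calM}(\Gamma) = (\bigoplus_a \calM_a(\Gamma)) \otimes \C[E_2]$ inherits the ring structure — it suffices to prove that each $G_{S,N,-1,2j}(q)$ lies in $\widetilde{\calM}(\Gamma_0(2N))$ in cases (1) and (2), and in $\widetilde{\calM}(\Gamma_1(2N))$ in case (3).

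For this, the key identity is \eqref{eq:epsil-1}, namely $G_{S,N,-1,k}(q) = 2^k G_{S,N,1,k}(q^2) - G_{S,N,1,k}(q)$, which reduces everything to the $\epsilon = 1$ case already treated in \cref{prop:Eisenstein-modular}. The second summand $G_{S,N,1,k}(q)$ lies in $\widetilde{\calM}(\Gamma_0(N)) \subset \widetilde{\calM}(\Gamma_0(2N))$ in cases (1) and (2), and in $\widetilde{\calM}(\Gamma_1(N)) \subset \widetilde{\calM}(\Gamma_1(2N))$ in case (3) (decomposing the symmetric set $S$ into pairs $\{-l,l\}$ exactly as in the proof of \cref{cor:epsil1}). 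For the first summand, note that the substitution $z \mapsto 2z$, i.e.\ $q \mapsto q^2$, is a ring homomorphism on $q$-series; applied to a spanning element $f \cdot E_2^j$ of $\widetilde{\calM}(\Gamma_0(N))$ with $f \in \calM_a(\Gamma_0(N))$ it gives $f(2z) E_2(2z)^j$, where $f(2z) \in \calM_a(\Gamma_0(2N))$ by \cref{lem:CS}(a) and $E_2(2z) \in \widetilde{\calM}_2(\Gamma_0(2))$ by \eqref{eq:E2N-quasi}. Since $\Gamma_1(2N) \subset \Gamma_0(2N) \subset \Gamma_0(2)$, we get $E_2(2z) \in \widetilde{\calM}_2(\Gamma_0(2N)) \subset \widetilde{\calM}_2(\Gamma_1(2N))$, and the same reasoning with $\Gamma_1$ in place of $\Gamma_0$ in \cref{lem:CS}(a) covers case (3). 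By linearity, $G_{S,N,1,k}(q^2)$, hence $G_{S,N,-1,k}(q)$, lands in the claimed space, completing the proof.

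The only non-mechanical point is the level inflation forced by $q \mapsto q^2$: one has to pass from $\Gamma_0(N)$ to $\Gamma_0(2N)$ (and from $\Gamma_1(N)$ to $\Gamma_1(2N)$), and — crucially — to remember that $E_2(2z)$ is genuinely only quasi-modular, lying in $\widetilde{\calM}_2(\Gamma_0(2)) \setminus \calM_2(\Gamma_0(2))$ because $E_{2,2}(z) = E_2(z) - 2E_2(2z) \in \calM_2(\Gamma_0(2))$; this is exactly what \eqref{eq:E2N-quasi} records, and it is the reason the output group has level $2N$ rather than $N$. Beyond that, the argument is identical bookkeeping to the proofs of \cref{prop:Eisenstein-modular} and \cref{cor:epsil1}.
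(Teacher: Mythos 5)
Your proof is correct and follows essentially the same route as the paper: reduce via \eqref{eq:epsil-1} to the $\epsilon = 1$ case, invoke \cref{prop:Eisenstein-modular} together with \cref{lem:CS}(a) for the level-doubling substitution $q \mapsto q^2$, and control $E_2(2z)$ through \eqref{eq:E2N-quasi}. The only cosmetic difference is that the paper isolates the weight-$2$ piece of case (3) via the explicit decomposition $g(z) + cE_2(z)$ with $g \in \calM_2(\Gamma_1(N))$, whereas you run the same step uniformly over monomials $f \cdot E_2^j$; the key ingredients and the resulting levels are identical.
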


\begin{proof}
	By combining \cref{lem:CS} and \eqref{eq:epsil-1} with the proof of \cref{prop:Eisenstein-modular}, we deduce that $G_{S,N,-1,k}(q)\in\widetilde{\calM}(\Gamma_0(2N))$ if $S=\{0\}$ or $S = \{n \in \Z/N\Z : (n,N) = 1\}$, and $G_{S,N,-1,k}(q)\in\widetilde{\calM}(\Gamma_1(2N))$ if $S = \{-l, l\}$ for some $l \in \Z/N\Z \setminus \{0\}$. Only the case $k=2$ in (3) requires further attention, so the details are explained below. Since $G_{\{-l,l\}, N,1,2}(q) \in \widetilde{\calM}_2(\Gamma_1(N))$, by the definition of quasi-modular forms, it can be expressed as
	\[
		G_{\{-l,l\}, N, 1, 2}(q) = g(z) + c E_2(z)
	\]
	for some $g \in \calM_2(\Gamma_1(N))$ and a constant $c \in \C$. By applying \cref{lem:CS} and \eqref{eq:E2N-quasi}, we have
	\[
		G_{\{-l,l\}, N,1,2}(q^2) = g(2z) + c E_2(2z) \in \widetilde{\calM}_2(\Gamma_1(2N)).
	\]
	The proof is then completed in the same way as in the proof of \cref{cor:epsil1}.
\end{proof}

The results above do not offer a full classification of all cases where $A_{S,N,\epsilon, k}(q)$ is a quasi-modular form. However, they do establish that all variants of MacMahon's original functions are quasi-modular forms.

\begin{corollary}
	MacMahon's functions $A_k(q), B_k(q), \dots, H_k(q)$ are quasi-modular forms as follows.
	\begin{align*}
		&A_k(q) \in \widetilde{\calM}(\SL_2(\Z)), & &B_k(q) \in \widetilde{\calM}(\Gamma_0(2)), & &C_k(q) \in \widetilde{\calM}(\Gamma_0(2)), & &D_k(q) \in \widetilde{\calM}(\Gamma_0(4)),\\
		&E_k(q) \in \widetilde{\calM}(\Gamma_1(5)), & &F_k(q) \in \widetilde{\calM}(\Gamma_1(10)), & &G_k(q) \in \widetilde{\calM}(\Gamma_1(5)), & &H_k(q) \in \widetilde{\calM}(\Gamma_1(10)).
	\end{align*}
\end{corollary}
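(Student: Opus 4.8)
The plan is to deduce this final corollary directly from the identifications of MacMahon's eight variants listed just after \cref{def:ASNk}, combined with the three preceding corollaries. First I would record, for each of the eight functions, which generalized MacMahon function it equals up to a sign: $A_k(q) = A_{\{0\},1,1,k}(q)$, $B_k(q) = (-1)^k A_{\{0\},1,-1,k}(q)$, $C_k(q) = A_{\{1\},2,1,k}(q)$, $D_k(q) = (-1)^k A_{\{1\},2,-1,k}(q)$, and similarly for $E_k, F_k, G_k, H_k$ with $(N,S) = (5, \{1,4\})$ or $(5,\{2,3\})$. The overall sign $(-1)^k$ is a scalar and does not affect membership in a space of quasi-modular forms, so in each case it suffices to locate the relevant $A_{S,N,\epsilon,k}(q)$.

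Next I would run through the eight cases by matching the parameters against the hypotheses of \cref{cor:epsil1} and its two successors. For $A_k$: here $N = 1$, $S = \{0\}$, $\epsilon = 1$, so \cref{cor:epsil1}(1) gives $A_k(q) \in \widetilde{\calM}(\Gamma_0(1)) = \widetilde{\calM}(\SL_2(\Z))$. For $B_k$: $N=1$, $S=\{0\}$, $\epsilon = -1$, so the $\epsilon=-1$ corollary (1) gives membership in $\widetilde{\calM}(\Gamma_0(2))$. For $C_k$: $N=2$, $S=\{1\}$; since $\{1\} = \{n \in \Z/2\Z : (n,2)=1\}$, \cref{cor:epsil1}(2) yields $C_k(q) \in \widetilde{\calM}(\Gamma_0(2))$. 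For $D_k$: same $S$ and $N$ with $\epsilon=-1$, so the $\epsilon=-1$ corollary (2) gives $\widetilde{\calM}(\Gamma_0(4))$. For $E_k$ and $G_k$: $N=5$ and $S = \{1,4\}$ or $\{2,3\}$; since $4 \equiv -1$ and $3 \equiv -2 \pmod 5$, both sets are symmetric, so \cref{cor:epsil1}(3) gives membership in $\widetilde{\calM}(\Gamma_1(5))$. For $F_k$ and $H_k$: the same symmetric sets with $\epsilon = -1$, so the $\epsilon=-1$ corollary (3) gives $\widetilde{\calM}(\Gamma_1(10))$. Assembling these eight lines produces exactly the displayed table.

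The only point requiring genuine verification — and hence the main (mild) obstacle — is confirming that the sets $\{1\}$, $\{1,4\}$, and $\{2,3\}$ satisfy the precise hypotheses invoked. For $\{1\} \subset \Z/2\Z$ one must note that it coincides with the unit group, so part (2) of the corollaries applies (part (3) would also apply, since $\{1\}$ is trivially symmetric as $1 \equiv -1 \pmod 2$, but it would only give $\Gamma_1(2) = \Gamma_0(2)$, consistent). For $\{1,4\}$ and $\{2,3\}$ modulo $5$ one checks $-1 \equiv 4$ and $-2 \equiv 3$, so each is closed under negation and part (3) applies; these are not the full unit group $\{1,2,3,4\}$, so part (2) does not help here and the finer group $\Gamma_1(5)$ (rather than $\Gamma_0(5)$) is genuinely needed. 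Beyond these routine congruence checks the corollary is immediate, so I would present the proof as a short case-by-case application of the three preceding corollaries.
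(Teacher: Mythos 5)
Your proposal is correct and matches the paper's (implicit) argument: the corollary is meant to follow by reading off the parameters $(S,N,\epsilon)$ in the identifications of $A_k,\dots,H_k$ as generalized MacMahon functions and applying the corresponding case of \cref{cor:epsil1} or its $\epsilon=-1$ analogue, exactly as you do, with the scalar $(-1)^k$ being irrelevant. Your congruence checks (symmetry of $\{1,4\}$ and $\{2,3\}$ mod $5$, and $\{1\}$ being the unit group mod $2$) are precisely the routine verifications needed, so nothing is missing.
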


% --------------------------------------------------------------------------
\section{Proof of \cref{thm:level2}}\label{section:level2}
% --------------------------------------------------------------------------

As a corollary of \cref{thm:ASNk-poly}, we obtain explicit formulas for the level 2 MacMahon partition functions $M_k^{(2)}(n)$ in terms of level 2 divisor sums defined by
\[
	\sigma_k^{(2)}(n) \coloneqq \sigma_{\{1\}, 2, 1, k}(n) = \sum_{\substack{d \mid n \\ n/d \equiv 1\ (2)}} d^k.
\]
In this section, we extend the formulas originally shown by MacMahon, recall Leli\`{e}vre's criteria--a fundamental strategy for obtaining prime-detecting expressions--for level $N$, and conclude with the proof of \cref{thm:level2}. For simplicity, we put
\[
	G_k^{(2)}(q) \coloneqq G_{\{1\}, 2, 1, k}(q) = \sum_{n=1}^\infty \sigma_{k-1}^{(2)}(n) q^n.
\]

% --------------------------------------------------------------------------
\subsection{MacMahon's explicit formulas, revisited}
% --------------------------------------------------------------------------

By \cref{thm:ASNk-poly}, the first few examples of $C_k(q) = A_{\{1\}, 2, 1, k}(q)$ are given as follows:
\begin{align*}
	C_1 &= G_2^{(2)},\\
	C_2 &= \frac{1}{12} \bigg(6 (G_2^{(2)})^2+G_2^{(2)} - G_4^{(2)} \bigg),\\
	C_3 &= \frac{1}{360} \bigg(60 (G_2^{(2)})^3+30 (G_2^{(2)})^2- 2(15 G_4^{(2)} -2) G_2^{(2)} -5 G_4^{(2)}+G_6^{(2)}\bigg),\\
	C_4 &= \frac{1}{20160} \bigg(840 (G_2^{(2)})^4+840 (G_2^{(2)})^3 -42(20 G_4^{(2)}-7) (G_2^{(2)})^2\\
		&\qquad + 4(14G_6^{(2)} - 105 G_4^{(2)} + 9)G_2^{(2)} +70 (G_4^{(2)})^2-49 G_4^{(2)}+14 G_6^{(2)} -G_8^{(2)}\bigg).
\end{align*}

These provide the following explicit formulas for $M_k^{(2)}(n)$. While the formulas for $M_1^{(2)}(n)$ through $M_3^{(2)}(n)$ were given by MacMahon, it is important to note that when considering up to $M_4^{(2)}(n)$, these are no longer $\Q[n]$-linear combinations of divisor sums but instead involve the Fourier coefficients of a cusp form.

\begin{proposition}\label{prop:MacMahon-explicit-2}
	We have
	\begin{align*}
		M_1^{(2)}(n) &= \sigma_1^{(2)}(n),\\
		M_2^{(2)}(n) &= \frac{1}{24} \bigg(\sigma_3^{(2)}(n) - (3n-2) \sigma_1^{(2)}(n)\bigg),\\
		M_3^{(2)}(n) &= \frac{1}{5760} \bigg(\sigma_5^{(2)}(n) - 5(3n-8) \sigma_3^{(2)}(n) + 2(15n^2 - 60n + 32) \sigma_1^{(2)}(n) \bigg),\\
		M_4^{(2)}(n) &= \frac{1}{16450560} \bigg(3\sigma_7^{(2)}(n) -119 (n-6) \sigma_5^{(2)}(n) + 357 (3n^2 - 30 n + 56) \sigma_3^{(2)}(n)\\
			&\qquad -51 (35n^3 - 420n^2 + 1176n -576) \sigma_1^{(2)}(n) + 14 a(n) \bigg),
	\end{align*}
	where
	\begin{align}\label{eq:Delta-2}
		\Delta_2(q) = \sum_{n=1}^\infty a(n) q^n &\coloneqq q \prod_{j=1}^\infty (1-q^j)^8 (1-q^{2j})^8 \in \calS_8(\Gamma_0(2)).
	\end{align}
\end{proposition}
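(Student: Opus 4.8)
The starting point is \cref{thm:ASNk-poly} applied to $S=\{1\}$, $N=2$, $\epsilon=1$, which gives the explicit identity $C_k(q)=\Lambda_k\bigl(G_2^{(2)}(q),G_4^{(2)}(q),\dots,G_{2k}^{(2)}(q)\bigr)$; the polynomials $\Lambda_1,\dots,\Lambda_4$ are written out after \cref{def:Lehmer-poly}, so the corresponding expansions of $C_1,\dots,C_4$ are exactly those displayed just before \cref{prop:MacMahon-explicit-2}. What remains is to convert these polynomial expressions into the claimed shape, in which the coefficient $M_k^{(2)}(n)$ of $q^n$ is a $\Q[n]$-linear combination of the level $2$ divisor sums $\sigma_1^{(2)}(n),\sigma_3^{(2)}(n),\dots$, together with (only when $k=4$) a rational multiple of $a(n)$. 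The obstruction is that $\Lambda_k$ contains products such as $(G_2^{(2)})^a(G_4^{(2)})^b\cdots$, whose $q^n$-coefficients are not manifestly of this form.

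To remove it I would use the $\Gamma_0(2)$-analogues of Ramanujan's identities \eqref{eq:Ramanujan-id}. By \cref{prop:Eisenstein-modular}, each $G_{2i}^{(2)}(q)$ lies in $\widetilde{\calM}_{2i}(\Gamma_0(2))$ (and is an honest modular form for $i\ge 2$, while $G_2^{(2)}$ is a weight $2$ quasi-modular form). Since a product of quasi-modular forms is again quasi-modular and $D$ raises the weight by $2$, all the identities needed for $k\le 4$ live in the finite-dimensional spaces $\widetilde{\calM}_w(\Gamma_0(2))$ with $w\le 8$; by \cref{prop:quasi-structure} such an identity is valid as soon as it holds for enough initial Fourier coefficients. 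In this way one verifies
\[
	DG_2^{(2)} = G_4^{(2)} - 4\,(G_2^{(2)})^2, \qquad DG_4^{(2)} = G_6^{(2)} - 16\,G_2^{(2)}G_4^{(2)},
\]
a companion weight $8$ identity expressing $DG_6^{(2)}$ in terms of $G_8^{(2)}$, $G_2^{(2)}G_6^{(2)}$, $(G_4^{(2)})^2$ and monomials in $G_2^{(2)},G_4^{(2)}$, and the relation $\Delta_2 = G_8^{(2)} - 136\,(G_4^{(2)})^2$ that locates the cusp form \eqref{eq:Delta-2} inside the Eisenstein algebra. Iterating these, every monomial in $G_2^{(2)},G_4^{(2)},\dots$ of weight at most $8$ can be rewritten as a $\Q$-linear combination of the ``Eisenstein derivatives'' $D^j G_{2l}^{(2)}$ together with, when the weight equals $8$, the cusp form $\Delta_2$; equivalently, for $w\le 8$ the weight-$w$ graded piece of $\C[G_2^{(2)},G_4^{(2)},\dots]$ coincides with the span of those functions (a matter of checking that both sides have the same dimension, the inclusion being supplied by the displayed identities).

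Substituting these reductions into $C_k=\Lambda_k(G_2^{(2)},\dots,G_{2k}^{(2)})$ and collecting terms then yields the proposition. For $k=1,2,3$ only weights $\le 6$ occur, where $\calS_w(\Gamma_0(2))=\{0\}$, so $C_k$ becomes a $\Q$-linear combination of the $D^j G_{2l}^{(2)}$ alone; since the $q^n$-coefficient of $D^j G_{2l}^{(2)}$ is $n^j\sigma_{2l-1}^{(2)}(n)$, the coefficient of $q^n$ in $C_k$ is automatically a $\Q[n]$-combination of the $\sigma_{2l-1}^{(2)}(n)$, and one reads off the polynomials, clearing denominators to match the stated forms (recovering MacMahon's formulas for $M_1^{(2)},M_2^{(2)},M_3^{(2)}$). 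For $k=4$ a weight $8$ part appears, which in addition produces a rational multiple of the $q^n$-coefficient $a(n)$ of $\Delta_2$; carrying out the bookkeeping gives the stated expression with the overall factor $\tfrac{1}{16450560}$ and the term $14\,a(n)$.

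The real work, and the main obstacle, is at the weight $8$ level: pinning down the $DG_6^{(2)}$ identity and the relation $\Delta_2=G_8^{(2)}-136\,(G_4^{(2)})^2$, and — more conceptually — checking that for $w\le 8$ no ``second Eisenstein direction'' survives in $C_k$, i.e.\ that $C_k$ actually lands in the span of the $D^jG_{2l}^{(2)}$ and $\Delta_2$ rather than in the full space $\widetilde{\calM}_w(\Gamma_0(2))$, whose Eisenstein part at each weight $w\ge 4$ is two-dimensional and would otherwise force $\sigma_{2l-1}(n)$- or $\sigma_{2l-1}^{(2)}(n/2)$-type terms into the formulas. This is precisely what the Ramanujan-type identities guarantee, and it is why a bare dimension count for $\widetilde{\calM}_w(\Gamma_0(2))$ does not suffice. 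Everything else — expanding $\Lambda_k$, solving the resulting small linear systems, and clearing denominators — is routine.
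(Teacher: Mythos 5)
Your proposal is correct and arrives at the same formulas, but it organizes the computation differently from the paper, and the difference is worth recording. The paper's proof starts from the same expansions $C_k=\Lambda_k(G_2^{(2)},\dots,G_{2k}^{(2)})$, then invokes the explicit bases $\calM_2(\Gamma_0(2))=\C E_{2,2}$, $\calM_4(\Gamma_0(2))=\C E_4\oplus\C G_4^{(2)}$, $\calM_6(\Gamma_0(2))=\C E_6\oplus\C G_6^{(2)}$, $\calM_8(\Gamma_0(2))=\C E_8\oplus\C G_8^{(2)}\oplus\C \Delta_2$ together with \cref{prop:quasi-structure} (applied with $\phi=G_2^{(2)}$) to write each $C_k$ as an undetermined linear combination of $D$-derivatives of \emph{all} of these generators, and then solves the resulting linear system by comparing initial $q$-coefficients; the fact that $E_4,E_6,E_8,E_{2,2}$ and their derivatives drop out is an \emph{output} of that computation. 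You instead establish $\Gamma_0(2)$-Ramanujan identities and reduce the monomials directly; the identities you need do exist and check out, namely $DG_2^{(2)}=G_4^{(2)}-4(G_2^{(2)})^2$, $DG_4^{(2)}=G_6^{(2)}-16\,G_2^{(2)}G_4^{(2)}$, $DG_6^{(2)}=G_8^{(2)}-24\,G_2^{(2)}G_6^{(2)}-40\,(G_4^{(2)})^2$, and $\Delta_2=G_8^{(2)}-136\,(G_4^{(2)})^2$, each verifiable on finitely many coefficients inside a finite-dimensional space. This buys a structural explanation, absent from the paper, of \emph{why} only $\sigma_{2l-1}^{(2)}(n)$ and $a(n)$ can appear (closure of the $G^{(2)}$-subalgebra under $D$ in weight $\le 8$), at the cost of having to locate $DG_6^{(2)}$ and $\Delta_2$ inside that subalgebra explicitly; the paper's ansatz avoids that but learns the vanishing of the extra Eisenstein directions only a posteriori. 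The one step you should make quantitative is ``enough initial Fourier coefficients'': a Sturm-type bound for each graded piece of $\widetilde{\calM}_w(\Gamma_0(2))$, $w\le 8$, obtained from the decomposition in \cref{prop:quasi-structure}, suffices — though the paper's own proof leaves the analogous verification at the same level of detail.
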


\begin{proof}
	It is clear that $M_1^{(2)}(n) = \sigma_1^{(2)}(n)$ from $C_1(q) = G_2^{(2)}(q)$. Moreover, it is known that
	\begin{align*}
		\calM_2(\Gamma_0(2)) &= \C E_{2,2},\\
		\calM_4(\Gamma_0(2)) &= \C E_4 \oplus \C G_4^{(2)},\\
		\calM_6(\Gamma_0(2)) &= \C E_6 \oplus \C G_6^{(2)},\\
		\calM_8(\Gamma_0(2)) &= \C E_8 \oplus \C G_8^{(2)} \oplus \C \Delta_2,
	\end{align*}
	where $E_k(z)$ and $E_{2,2}(z)$ are defined in \eqref{eq:Eisenstein} and \cref{lem:CS}, respectively. Indeed, the dimensions of the spaces $\calM_k(\Gamma_0(2))$ ($k = 2,4,6,8$) are given as $1, 2, 2, 3$, respectively, by \cite[Theorem 3.5.1]{DiamondShurman2005}, and $\calS_8(\Gamma_0(2)) = \C \Delta_2$ is shown in~\cite[Proposition 3.2.2]{DiamondShurman2005}. The fact that $E_k$ and $G_k^{(2)}$ form a basis of the quotient space $\calM_k(\Gamma_0(2))/\calS_k(\Gamma_0(2))$ ($k \ge 4$) can be verified from the expression \eqref{eq:GS=Moebius} and their Fourier series expansions. By applying \cref{prop:quasi-structure} for a quasi-modular form $\phi = G_2^{(2)}$, we see that $C_k(q)$ can be expressed as a linear combination of the $D$-derivatives of these basis and $G_2^{(2)}$. For instance, since $C_2(q)$ is a linear combination of quasi-modular forms of weight $4$ and $2$, it can be written as
	\[
		C_2(q) = \bigg( a_1 E_4 + a_2 G_4^{(2)} + a_3 D E_{2,2} + a_4 DG_2^{(2)} \bigg) + \bigg(a_5 E_{2,2} + a_6 G_2^{(2)} \bigg)
	\]
	for some $a_1, \dots, a_6 \in \C$. By comparing the first few coefficients of the $q$-series expansions, a system of equations in $a_1, \dots, a_6$ is obtained, and solving this system yields
	\[
		C_2(q) = \frac{1}{24} G_4^{(2)} - \frac{1}{8} D G_2^{(2)} + \frac{1}{12} G_2^{(2)} = \frac{1}{24} \bigg(G_4^{(2)} - (3 D - 2) G_2^{(2)} \bigg),
	\]
	which implies the desired formula for $M_2^{(2)}(n)$. In a similar manner, the other results can be obtained.
\end{proof}

% --------------------------------------------------------------------------
\subsection{Leli\`{e}vre's criteria}
% --------------------------------------------------------------------------

We begin the proof of \cref{thm:level2}. The strategy, as in the work in~\cite{CraigIttersumOno2024, Gomez2024, Craig2024}, follows Leli\`{e}vre's criteria appeared in an unpublished note~\cite{Lelievre2004}. First, we extend this criteria to level $N$ for our purpose. Here, we consider the case in (2) of \cref{prop:Eisenstein-modular}, that is, $S = \{n \in \Z/N\Z : (n,N) = 1\}$ and 
\[
	G_k^{(N)}(q) \coloneqq G_{S,N,1,k}(q) = \sum_{n=1}^\infty \bigg(\sum_{\substack{d \mid n \\ (n/d, N)=1}} d^{k-1} \bigg)q^n.
\]

\begin{proposition}[Leli\`{e}vre's criteria]\label{prop:Lelievre}
	Let $k, l$ be positive integers with $l > k$. For $n \ge 2$, the $n$-th Fourier coefficient of
	\[
		f_{k,l}^{(N)}(q) \coloneqq (D^l + 1) G_{k+1}^{(N)}(q) - (D^k + 1) G_{l+1}^{(N)}(q)
	\]
	satisfies
	\[
		\begin{cases}
			=0 &\text{if $n$ is prime with $n \nmid N$},\\
			<0 &\text{if all prime factors $p$ of $n$ satisfy $p \mid N$},\\
			>0 &\text{otherwise}.
		\end{cases}
	\]
\end{proposition}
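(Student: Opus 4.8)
The plan is to reduce the claim to an explicit formula for the $n$-th Fourier coefficient of $f_{k,l}^{(N)}(q)$ and then read off its sign from a divisor-sum rearrangement. Since $Dq^n=nq^n$, the operator $D^j+1$ scales the $n$-th Fourier coefficient by $n^j+1$; writing $s_j(n)\coloneqq\sum_{d\mid n,\ (n/d,N)=1}d^j$ for the $n$-th coefficient of $G_{j+1}^{(N)}(q)$, the $n$-th Fourier coefficient of $f_{k,l}^{(N)}(q)$ equals
\[
	c(n)\;=\;(n^l+1)\,s_k(n)\;-\;(n^k+1)\,s_l(n),
\]
so the entire task is to determine the sign of $c(n)$ for $n\ge 2$.

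The first step is to write $s_j(n)=\sum_{u\mid n,\ (n/u,N)=1}u^j$ and regroup, which gives
\[
	c(n)\;=\;\sum_{\substack{u\mid n\\ (n/u,N)=1}}t(u),\qquad t(u)\coloneqq u^k(n^l+1)-u^l(n^k+1).
\]
Two elementary facts about the summand control everything: first, $t(n)=n^k-n^l<0$ because $n\ge 2$ and $l>k$; second, for every \emph{proper} divisor $u\mid n$ (so $u\le n/2$) one has $t(u)>0$, since $t(u)>0$ is equivalent to $u^{l-k}(n^k+1)<n^l+1$, and the estimate $u^{l-k}\le(n/2)^{l-k}$ reduces this to $n^l+n^{l-k}<2^{l-k}(n^l+1)$, which is clear from $2^{l-k}\ge 2$ and $n^{l-k}<n^l$.

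With these in hand the three cases separate cleanly. If every prime factor of $n$ divides $N$, then $u=n$ is the only index appearing in the sum, so $c(n)=t(n)<0$; this is the middle case. Otherwise pick a prime $p\mid n$ with $p\nmid N$. Then $u=n/p$ occurs in the sum, and because every proper divisor that occurs contributes a positive term, $c(n)\ge t(n)+t(n/p)$. If $n=p$ is itself prime, then $n/p=1$ and $t(1)=n^l-n^k=-t(n)$, forcing $c(n)=t(n)+t(1)=0$; this is the case of a prime $n$ with $n\nmid N$. If instead $n$ is composite, then $w\coloneqq n/p\ge 2$, and it remains to establish the strict inequality $t(n)+t(n/p)>0$, which then yields $c(n)>0$.

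The only genuine computation is this last inequality. Writing $n=pw$ with $w\ge 2$, $p\ge 2$, $l>k\ge 1$, it rearranges to $p^lw^l(w^k-1)>p^kw^k(w^l-1)+w^k(w^{l-k}-1)$; bounding $w^k-1\ge w^k/2$ and discarding the subtracted ones reduces it, after dividing through, to $w^k\bigl(\tfrac12 p^l-p^k\bigr)>1$, which holds as soon as $p\ge 3$ or $l\ge k+2$. The single remaining configuration $p=2$, $l=k+1$ must be handled without that lossy bound: there the desired inequality becomes the polynomial positivity statement $2^k w^{k+1}-(2^{k+1}+1)w+2^k+1>0$ for every integer $w\ge 2$, which is verified directly (the leading term dominates for $w\ge 3$, and $w=2$ is a one-line check). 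I expect this small amount of case-chasing — in particular checking that the degenerate configuration $p=2,\ l=k+1$ genuinely closes — to be the main, albeit modest, obstacle; everything else is bookkeeping.
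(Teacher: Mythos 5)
Your proof is correct and follows essentially the same route as the paper: both reduce to the sign of the divisor sum $\sum_{d\mid n,\ (n/d,N)=1}\bigl((n^l+1)d^k-(n^k+1)d^l\bigr)$, verify the prime case directly, show all proper divisors contribute positive terms, and offset the negative $d=n$ term against one well-chosen positive term. The only cosmetic difference is that you pair $d=n$ with $d=n/p$ for a prime $p\nmid N$, whereas the paper pairs it with the maximal divisor $n_1$ of $n$ supported on primes dividing $N$ (which forces an extra subcase $n_1=1$); both variants end with the same small case analysis on $l-k$.
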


\begin{proof}
	By the definition of $G_{k}^{(N)}(q)$, we have
	\[
		f_{k,l}^{(N)}(q) = \sum_{n=1}^{\infty} \bigg(\sum_{\substack{d \mid n\\(n/d,N)=1}}			(n^{l}+1)d^{k}-(n^{k}+1)d^{l}\bigg)q^{n}.
	\]
	
	If $n$ is prime and $n\nmid N$, then the $n$-th Fourier coefficient of $f_{k,l}^{(N)}(q)$ is given by
	\[
		(n^{l}+1)-(n^{k}+1)+(n^{l}+1)n^{k}-(n^{k}+1)n^{l}=0.
	\]
	
	If all prime factors $p$ of $n$ satisfy $p\mid N$, the contribution to the sum comes only from the case where $d=n$, and the $n$-th Fourier coefficient of $f_{k,l}^{(N)}(q)$ becomes
	\[
		(n^{l}+1)n^{k}-(n^{k}+1)n^{l}=n^{k}-n^{l}<0.
	\]
	
	Finally, in the remaining case, we can uniquely factor $n$ as $n=n_{1}n_{2}$, where $(n_{2},N)=1$, $n_2 > 1$, and all primes factors of $n_{1}$ divide $N$. For any divisor $d \mid n$ such that $(n/d, N) =1$ except for $d = n$, since $d \le n/2$, we have
	\begin{align*}
		(n^l + 1)d^k - (n^k+1) d^l &\ge (n^l+1)d^k - 2n^k d^k \left(\frac{n}{2}\right)^{l-k}\\
			&= d^k \bigg(1 + \bigg(1- \frac{1}{2^{l-k-1}}\bigg) n^l\bigg) > 0.
	\end{align*}
	The term corresponding to $d=n$ gives $n^k - n^l < 0$, which requires careful consideration. However, by considering it together with the term corresponding to $d = n_1$, we obtain the following estimate:
	\begin{align}\label{eq:easy-estimate}
	\begin{split}
		&(n^l+1)n^k - (n^k+1)n^l + (n^l+1)n_1^k - (n^k+1)n_1^l\\
		&\quad = n_1^k \bigg(n_2^k (n_2^{l-k}-1) n_1^{l-k} (n_1^k-1) - (n_2^k+1)(n_1^{l-k}-1) \bigg).
	\end{split}
	\end{align}
	If $n_1 = 1$, this equals $0$. In this case, by the assumption made in the case distinction, $n_2$ must be a composite number, implying that there is a divisor of $n$ other than $d = n_1, n$. Therefore, the $n$-th coefficient is positive. 
	If $n_1 > 1$ and $l - k \ge 2$, since $n_1^{l-k}(n_1^k-1) > n_1^{l-k}-1$, \eqref{eq:easy-estimate} is bounded by
	\[
		> n_1^k \bigg(n_2^k (n_2^{l-k}-1) (n_1^{l-k}-1) - (n_2^k+1)(n_1^{l-k}-1) \bigg) = n_1^k (n_1^{l-k}-1) (n_2^k (n_2^{l-k} - 2) - 1) > 0.
	\]
	Finally, if $n_1 > 1$ and $l - k = 1$, \eqref{eq:easy-estimate} becomes
	\begin{align*}
		&= n_1^k \bigg(n_2^k (n_2-1) n_1 (n_1^k-1) - (n_2^k+1)(n_1-1) \bigg)\\
		&\ge n_1^k (n_1-1) \bigg(n_2^k (n_1 + n_1^2 + \cdots + n_1^k) - (n_2^k + 1) \bigg) > 0,
	\end{align*}
	which concludes the proof. 
\end{proof}

\begin{proof}[Proof of \cref{thm:level2}]
	By \cref{prop:MacMahon-explicit-2}, we can easily verify that
	\begin{align*}
		f_{1,3}^{(2)}(q) &= (D+1) \bigg((D^2 - 4D + 3) C_1(q) - 24 C_2(q) \bigg),\\
		f_{1,5}^{(2)}(q) &= (D+1) \bigg((D^4 - D^3 -14D^2 + 29D - 15) C_1(q) - 120 (3D-8)C_2(q) - 5760 C_3(q) \bigg).
	\end{align*}
	\cref{prop:Lelievre} immediately implies the desired results.
\end{proof}

% --------------------------------------------------------------------------
\subsection{Remarks on further prime-detecting expressions}
% --------------------------------------------------------------------------

In the proof of \cref{thm:level2} above, the key idea was to realize the function $f_{k,l}^{(2)}(q)$ of Leli\`{e}vre's criteria through the combination of $C_1(q)$, $C_2(q)$, and $C_3(q)$. The same approach for $C_4(q)$, however, fails because the cusp form $\Delta_2$ cannot be expressed as a linear combination of $D$-derivatives of Eisenstein series. A similar obstruction arose in Craig--van Ittersum--Ono's work~\cite{CraigIttersumOno2024} due to $\Delta \in \calS_{12}(\SL_2(\Z))$. In contrast, the case of $A_k(q)$ benefits from the fact that $\calS_{14}(\SL_2(\Z)) = \emptyset$, which allows the derivation of the fifth prime-detecting expression. For these reasons, in the level $2$ case, if we make a reasonable conjecture based on previous work~\cite{CraigIttersumOno2024}, it appears that the level 2 prime-detecting expressions described using $M_k^{(2)}(n)$ are essentially limited to the two provided in \cref{thm:level2}.

By following a similar approach, the level 3 version can also be considered. Let $M_k^{(3)}(n)$ denote the $n$-th coefficient of $q$-series expansion of $A_{\{1,2\}, 3, 1, k}(q)$, that is,
\[
	A_{\{1,2\}, 3, 1, k}(q) \coloneqq \sum_{n=1}^\infty M_k^{(3)}(n) q^n \coloneqq \sum_{\substack{0 < m_1 < m_2 < \cdots < m_k \\ m_i \not\equiv 0\ (3)}} \frac{q^{m_1+m_2+\cdots+m_k}}{(1-q^{m_1})^2 (1-q^{m_2})^2 \cdots (1-q^{m_k})^2}.
\]
A level 3 prime-detecting expression is obtained as
\begin{align}
	(n^2 - 3n + 2) M_1^{(3)}(n) -12 M_2^{(3)}(n) \begin{cases}
		=0 &\text{if $n$ is prime except for $3$},\\
		<0 &\text{if $n = 3^l$ for $l \in \Z_{\ge 1}$},\\
		>0 &\text{otherwise}.
	\end{cases}
\end{align}
However, due to the cusp form
\[
	\Delta_3(q) \coloneqq q \prod_{j=1}^\infty (1-q^j)^6 (1-q^{3j})^6 \in \calS_6(\Gamma_0(3)),
\]
the argument stops here. While the quasi-modularity of the generalized MacMahon functions holds for more general levels $N$, the phenomenon of prime-detecting expressions involving the MacMahon function seems to be specific to the lower levels.

Finally, applying the same argument to the MacMahon function $B_k(q)$ would require examining when
\[
	\sum_{d \mid n} (-1)^d \bigg((n^l+1) d^k - (n^k+1) d^l \bigg)
\]
becomes zero and how its sign changes. However, due to the influence of the factor $(-1)^d$, a much more detailed and refined analysis is necessary. For this reason, it is not addressed in this article.

% --------------------------------------------------------------------------
\section{Proof of \cref{thm:lattice}}\label{section:theta}
% --------------------------------------------------------------------------

The proof of \cref{thm:lattice} involves the use of theta functions, but the foundation remains Leli\`{e}vre's criteria. First, we show that the counting functions $r_L(n)$ for the lattice $L \in \{L_1, L_2, E_8\}$ can be expressed using divisor functions.

\begin{proposition}
	For any $n \ge 1$, we have $r_{L_1}(n) = 4 \sigma_1^{(1,4)}(n)$, $r_{L_2}(n) = 4 \sigma_1^{(3,4)}(n)$, and $r_{E_8}(2n) = 240 \sigma_3(n)$, where
	\[
		\sigma_k^{(a,N)}(n) \coloneqq \begin{cases}
			\sigma_k(n) &\text{if } n \equiv a \pmod{N},\\
			0 &\text{otherwise},
		\end{cases}
	\]
	for integers $N > 0$ and $0 \le a < N$.
\end{proposition}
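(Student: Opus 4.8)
The plan is to identify each $r_L(n)$ as a Fourier coefficient of the theta series of the (translated) lattice $L$, and then to pin that theta series down. For $L \in \{L_1, L_2, E_8\}$ I would set
\[
	\Theta_L(z) \coloneqq \sum_{x \in L} q^{\|x\|^2} = \sum_{n \ge 0} r_L(n)\, q^n, \qquad q = e^{2\pi i z},
\]
and use Poisson's summation formula---in the guise of the transformation law for theta series with characteristic---to see that each $\Theta_L$ transforms as a holomorphic modular form: $\sum_{x \in E_8} e^{\pi i \tau \|x\|^2}$ has weight $4$ on $\SL_2(\Z)$ because $E_8$ is even and unimodular, while $\Theta_{L_1}$ and $\Theta_{L_2}$, attached to rank-$4$ cosets, have weight $2$ on a congruence subgroup $\Gamma_1(2^a)$. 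With modularity available, each claim reduces to comparing finitely many $q$-coefficients.

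For $E_8$: evenness gives $r_{E_8}(n) = 0$ for odd $n$; since $\calM_4(\SL_2(\Z)) = \C\, E_4$ is one-dimensional, matching constant terms ($r_{E_8}(0) = 1$) forces $\sum_{x \in E_8} e^{\pi i \tau \|x\|^2} = E_4(\tau)$, and specialising $\tau = 2z$ gives $\Theta_{E_8}(z) = E_4(2z) = 1 + 240 \sum_{m \ge 1} \sigma_3(m)\, q^{2m}$; reading off the coefficient of $q^{2n}$ yields $r_{E_8}(2n) = 240\, \sigma_3(n)$. (This is the classical ``theta series of $E_8$ is $E_4$'' and may simply be cited.)

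For $L_1$ and $L_2$: the translating vectors sit in coordinate directions orthogonal to the lattice directions, so $\Theta_{L_j}$ factors over coordinate blocks into the Jacobi theta functions $\theta_2(q) \coloneqq \sum_{n \in \Z} q^{(n+1/2)^2}$ and $\theta_3(q) \coloneqq \sum_{n \in \Z} q^{n^2}$. Indeed, writing $x \in L_1$ as $\bigl(2a,\, 2b,\, \tfrac{1}{\sqrt 2}(2c+1),\, \tfrac{1}{\sqrt 2}(2d+1)\bigr)$ gives $\|x\|^2 = 4a^2 + 4b^2 + \tfrac12(2c+1)^2 + \tfrac12(2d+1)^2$, so
\[
	\Theta_{L_1}(z) = \theta_3(q^4)^2\, \theta_2(q^2)^2, \qquad \Theta_{L_2}(z) = \theta_2(q^4)^2\, \theta_2(q^2)^2;
\]
the classical duplication formula $\theta_2(q^2)^2 = 2\, \theta_2(q^4)\, \theta_3(q^4)$ then recasts these as $\Theta_{L_1} = 2\, \theta_2(q^4)\, \theta_3(q^4)^3$ and $\Theta_{L_2} = 2\, \theta_2(q^4)^3\, \theta_3(q^4)$. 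It therefore suffices to establish
\[
	\theta_2(q)\, \theta_3(q)^3 = 2 \sum_{\substack{N \ge 1 \\ N \equiv 1\ (4)}} \sigma_1(N)\, q^{N/4}, \qquad \theta_2(q)^3\, \theta_3(q) = 2 \sum_{\substack{N \ge 1 \\ N \equiv 3\ (4)}} \sigma_1(N)\, q^{N/4},
\]
because the substitution $q \mapsto q^4$ turns these into $\theta_2(q^4)\theta_3(q^4)^3 = 2\sum_{n \equiv 1\ (4)} \sigma_1(n) q^n$ and $\theta_2(q^4)^3\theta_3(q^4) = 2\sum_{n \equiv 3\ (4)} \sigma_1(n) q^n$, whence $\Theta_{L_1}(z) = 4\sum_{n \equiv 1\ (4)} \sigma_1(n) q^n$ and $\Theta_{L_2}(z) = 4\sum_{n \equiv 3\ (4)} \sigma_1(n) q^n$, i.e.\ $r_{L_1}(n) = 4 \sigma_1^{(1,4)}(n)$ and $r_{L_2}(n) = 4 \sigma_1^{(3,4)}(n)$. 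For the displayed identities: the coefficient of $q^{N/4}$ in $\theta_2(q)\theta_3(q)^3$ is the number of $(x_1, x_2, x_3, x_4) \in \Z^4$ with $x_1$ odd, $x_2, x_3, x_4$ even, and $x_1^2 + x_2^2 + x_3^2 + x_4^2 = N$; since $x_i^2 \equiv 1 \pmod 4$ for $x_i$ odd and $\equiv 0 \pmod 4$ for $x_i$ even, every representation of $N \equiv 1 \pmod 4$ as a sum of four squares has exactly one odd entry, and by the symmetry of the form in its four variables a quarter of the representations place that entry in position $1$, so the coefficient equals $\tfrac14 r_4(N)$; Jacobi's four-square theorem ($r_4(N) = 8\sigma_1(N)$ for odd $N$, which is again a Poisson-summation statement since $\theta_3(q)^4 \in \calM_2(\Gamma_0(4))$) makes this $2\sigma_1(N)$. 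The second identity is identical after exchanging ``odd'' and ``even'', using that a representation of $N \equiv 3 \pmod 4$ has exactly three odd entries.

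The hard part is the opening step: checking that the translated-lattice theta series $\Theta_{L_1}, \Theta_{L_2}$ genuinely transform as weight-$2$ modular forms on an explicit congruence subgroup---this is precisely where Poisson summation does the non-elementary work---and that the target Eisenstein series $4\sum_{n \equiv a\ (4)} \sigma_1(n) q^n$ lives in the same finite-dimensional space, so that matching a bounded number of coefficients is conclusive. After that, the coordinate factorization, the duplication formula, the parity count, and Jacobi's theorem make everything routine.
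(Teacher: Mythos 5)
Your proof is correct, but it reaches the two rectangular-lattice identities by a different route than the paper. The paper rewrites $r_{L_1}(n)$ as the number of solutions of $4\bigl(n_1^2+n_2^2+\tfrac{n_3(n_3+1)}{2}+\tfrac{n_4(n_4+1)}{2}\bigr)+1=n$ (two squares plus two triangular numbers) and then simply cites the classical evaluations of Hirschhorn and Melham, together with Conway--Sloane for $E_8$; its proof is essentially a translation-plus-citation. You instead factor $\Theta_{L_1}$ and $\Theta_{L_2}$ into Jacobi theta functions, apply the duplication formula $\theta_2(q^2)^2 = 2\,\theta_2(q^4)\theta_3(q^4)$, and reduce everything to Jacobi's four-square theorem via the observation that a representation of $N\equiv 1$ (resp.\ $3$) $\pmod 4$ as a sum of four squares has exactly one odd (resp.\ one even) entry, which by the permutation symmetry of the form sits in a prescribed position exactly a quarter of the time; this is more self-contained and makes the constant $4 = 8/2$ in $r_{L_j}(n)=4\sigma_1(n)$ transparent. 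One small remark: your framing presents the weight-$2$ modularity of $\Theta_{L_1},\Theta_{L_2}$ on a congruence subgroup (via Poisson summation) followed by matching finitely many coefficients as the essential difficulty, but your actual argument never uses that modularity --- the coordinate factorization, the duplication formula, the parity count, and Jacobi's theorem already yield the identities for every $n$. The only genuinely modular inputs you need are Jacobi's four-square theorem and the identity $\sum_{x\in E_8} q^{\|x\|^2/2}=E_4$, which is the same amount of cited classical material as in the paper's proof.
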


\begin{proof}
	To begin, the function $r_{L_1}(n)$ is expressed as
	\[
		r_{L_1}(n) = \# \left\{(n_1, n_2, n_3, n_4) \in \Z^4 : 4 \left(n_1^2 + n_2^2 + \frac{n_3(n_3+1)}{2} + \frac{n_4(n_4+1)}{2}\right) + 1 = n \right\}.
	\]
	From this expression, if $n \not\equiv 1 \pmod{4}$, then $r_{L_1}(n) = 0$. For $n \equiv 1 \pmod{4}$, it is known that $r_{L_1}(n) = 4\sigma_1(n)$, (see, for instance,~\cite[(1.16)]{Hirschhorn2005}). Therefore, $r_{L_1}(n) = 4\sigma_1^{(1,4)}(n)$. Similarly, it is also known that $r_{L_2}(n) = 4 \sigma_1^{(3,4)}(n)$, (see, for instance,~\cite[(3)]{Melham2008}). 	
	As for the $E_8$-lattice, it is known that the theta function defined by
	\[
		\Theta_{E_8}(z) \coloneqq \sum_{x \in E_8} q^{\|x\|^2/2}
	\]
	equals $E_4(z)$, that is, $r_{E_8}(2n) = 240\sigma_3(n)$, (see~\cite[p.122]{ConwaySloane1999}). 
\end{proof}

\begin{proof}[Proof of \cref{thm:lattice}]
	The claims are equivalent to
	\[
		(n^3 +1) \sigma_1^{(a,4)}(n) - (n+1) \sigma_3(n) \begin{cases}
			=0 &\text{if $n$ is prime with $n \equiv a \pmod{4}$},\\
			<0 &\text{if $n \not\equiv a \pmod{4}$},\\
			>0 &\text{otherwise},
		\end{cases}
	\]
	for $a = 1,3$. This follows from a simple observation and the application of \cref{prop:Lelievre} with $N=1$ and $(k,l) = (1,3)$
\end{proof}

\begin{remark}
	Leli\`{e}vre's criteria for general arithmetic progressions modulo $N$ were discussed in Gomez~\cite{Gomez2024}.
\end{remark}

\bibliographystyle{amsalpha}
\bibliography{References} 

\end{document}